\numberwithin{equation}{section}
\newcommand{\nor}[2]{\bigl\| #1 \bigr\|_{#2}}
\newtheorem{theorem}{Theorem}[section]
\newtheorem{lemma}[theorem]{Lemma}
\newtheorem{definition}[theorem]{Definition}
\newtheorem{proposition}[theorem]{Proposition}
\newtheorem{remark}[theorem]{Remark}
\renewcommand{\deg}{\operatorname{deg}}
\newcommand{\Nat}{\mathbb{N}}
\newcommand{\Real}{\mathbb{R}}
\newcommand{\ce}{\mathcal{E}}
\newcommand{\cg}{\mathcal{G}}
\newcommand{\sg}{\mathscr{G}}
\newcommand{\ca}{\mathcal{A}}
\newcommand{\cb}{\mathcal{B}}
\newcommand{\cc}{\mathcal{C}}
\newcommand{\cd}{\mathcal{D}}
\newcommand{\Ex}[1]{\mathbb{E}\left[{#1}\right]}
\newcommand{\vol}{\mathrm{vol}}
\renewcommand{\Pr}[1]{\mathbb{P}\left[{#1}\right]}
\newcommand{\Psub}[2]{\mathbb{P}_{#1}\left[{#2}\right]}
\newcommand{\norm}[2]{\left\| {#1} \right\|_{#2}}
\newcommandtwoopt{\ip}[3][\cdot][\cdot]{\langle {#1}, {#2} \rangle_{{#3}}}
\newcommand{\abs}[1]{\left|{#1} \right|}
\newcommand{\tmix}{t_\mathrm{mix}}
\newcommand{\erdosrenyi}{Erd\H{o}s-R\'{e}nyi\xspace}
\newcounter{nummer}
\newcommand{\thmref}[1]{Theorem~\ref{thm:#1}}
\newcommand{\lemref}[1]{Lemma~\ref{lem:#1}}
\newcommand{\secref}[1]{Section~\ref{sec:#1}}
\begin{document}
	\title{Random walks on randomly evolving graphs\thanks{The second and third author acknowledge support by the ERC Starting Grant ``Dynamic March''.}}
	%
	%
	\author{Leran Cai \and Thomas Sauerwald \and Luca Zanetti}
	%
	%
	%
	\maketitle              
	\begin{abstract}
		A random walk is a basic stochastic process on graphs and a key primitive in the design of distributed algorithms. One of the most important features of random walks is that, under mild conditions, they converge to a stationary distribution in time that is at most polynomial in the size of the graph. This fundamental property, however, only holds if the graph does not change over time, while on the other hand many distributed networks are inherently dynamic, and their topology is subjected to potentially drastic changes. 
		
		In this work we study the mixing (i.e., converging) properties of random walks on graphs subjected to random changes over time. Specifically, we consider the edge-Markovian random graph model: for each edge slot, there is a two-state Markov chain with transition probabilities $p$ (add a non-existing edge) and $q$ (remove an existing edge). We derive several positive and negative results that depend on both the density of the graph and the speed by which the graph changes. 
		We show that if $p$ is very small (i.e., below the connectivity threshold of Erd\H{o}s-R\'{e}nyi random graphs), random walks do not mix (fast). When $p$ is larger, instead, we observe the following behavior: if the graph changes slowly over time (i.e., $q$ is small), random walks enjoy strong mixing properties that are comparable to the ones possessed by random walks on static graphs; however, if the graph changes too fast (i.e., $q$ is large), only coarse mixing properties are preserved.

		
	\end{abstract}

	\section{Introduction}
	
	A random walk on a network is a simple stochastic process, defined as follows. Given an undirected graph $G=(V,E)$, the walk starts at a fixed vertex. Then at each step, the random walk moves to a randomly chosen neighbor \footnote{In case of a {\em lazy} random walk, the walk would remain at the current location with probability $1/2$, and otherwise move to a neighbor chosen uniformly at random.}.  Due to their simplicity and locality, random walks are very useful algorithmic primitive, especially in the design of distributed algorithms. In contrast to topology-driven algorithms, algorithms based on random walks benefit from a strong robustness against structural changes in the network.
	
	Random walks and related works have found various applications such as routing, information spreading, opinion dynamics and graph exploration~\cite{DBLP:conf/sirocco/Cooper11,ChenDynamicRSA}. One key property of random walks is that, under mild assumptions on the underlying network, they converge to a stationary distribution -- an equilibrium state in which every vertex is visited proportionally to its degree. The time for this convergence to happen is called {\em mixing time}, and understanding this time is crucial for many sampling or exploration related tasks. In particular, whenever a graph has a small {\em mixing time}, also its {\em cover time} (the expected time to visit all vertices of the graph) is small as well.
	
	While most of the classical work devoted to understanding random walks has focused on static graphs, many networks today are subject to dramatic changes over time. Hence understanding the theoretical power and limitations of dynamic graphs has been identified as one of the key challenges in computer science~\cite{MS18}. Several recent works have indeed considered this problem and studied the behavior of random walks \cite{AugustinePR16,ChenDynamicRSA,SMP15,DenysyukR14,SpirakisCover18,merging1,merging2} or similar processes \cite{icalp2016voter,Clementi2016,ClementiST15,GiakkoupisSS14,KuhnO11} on such dynamic graphs, and their applications to distributed computing~\cite{AugustinePR16,SMP15,KuhnO11}. 
	Moreover, rather than a property of the underlying network itself, dynamic graphs may naturally arise in distributed algorithms when communication is performed on a changing, possibly disconnected, subgraph like a spanning-tree or a matching (see, e.g., \cite{gossip}).
	
	In this work, we study the popular {\em evolving graph model}. That is, we consider sequences of graphs $G_1, G_2,\ldots$ over the same set of vertices but with a varying set of edges. This model has been studied in, for example, \cite{ChenDynamicRSA,SauerwaldZanetti,SpirakisCover18}. Both \cite{ChenDynamicRSA} and later \cite{SauerwaldZanetti} proved a collection of positive and negative results about the mixing time (and related parameters), and they assume a worst-case scenario where the changes to the graph are dictated by an oblivious, non-adaptive adversary. For example, \cite{ChenDynamicRSA} proved the following remarkable dichotomy. First, even if all graphs $G_1,G_2,\ldots$ are connected, small (but adversarial) changes to the stationary distribution can cause exponential mixing (and hitting) times. Secondly, if the sequence of connected graphs share the same stationary distribution i.e., the degrees (or relative degrees) of vertices are time-invariant, then mixing and hitting times are polynomial. This assumption about a time-invariant stationary distribution is crucial in the majority of the positive results in \cite{ChenDynamicRSA,SauerwaldZanetti}.
	In general, in order to compensate for the adversarial changes, both works had to impose assumptions about the existence of a time-invariant stationary distribution, which means that the degree (or relative degree) of a vertex remains the same at each step. Both \cite{ChenDynamicRSA} and later \cite{SauerwaldZanetti} proved a collection of positive and negative results about the mixing time (and related parameters), assuming that the graph changes {\em arbitrarily} (i.e., by an oblivious, non-adaptive adversary). However, to compensate for the fact that the graph changes in an arbitrary way, both works had to impose assumptions about connectivity and the existence of a time-invariant stationary distribution, which means that the degree (or relative degree) of a vertex remains the same at each step.
	
	In contrast to \cite{ChenDynamicRSA,SauerwaldZanetti}, we do not impose such assumptions, but instead study a model with incremental changes. Specifically, we consider a setting where the evolving graph model changes {\em randomly} and study the so-called {\em edge-Markovian random graph} $\cg(n,p,q)$, which is defined as follows (see Definition~\ref{def:edgemarkovian} for a more formal description). For each edge slot, there is a two-state Markov chain that switches from off to on with probability $p$ and from on to off with probability $q$. This model can be seen as a dynamic version of the Erd\H{o}s-R\'{e}nyi random graph, and has been studied in the context of information spreading and flooding \cite{Clementi2016,Clementi2011,ClementiMMPS10}. While these results demonstrate that information disseminates very quickly on these dynamic graphs, analysing the convergence properties of a random walk seems to require new techniques, since the degree fluctuations make the use of any ``inductive'' arguments very difficult -- from one step to another, the distribution of the walk could become worse, whereas the set of informed (or reachable) nodes can never decrease.
	
	In this work, we will investigate the mixing time of a random walk on such evolving graphs. It turns out that, as our results demonstrate, the mixing time depends crucially on the density as well as on the speed by which the graph changes. We remark that deriving bounds on the mixing time on $\cg(n,p,q)$ poses some unique challenges, which are not present in the positive results of \cite{ChenDynamicRSA,SauerwaldZanetti}. 
	The main difficulty is that in $\cg(n,p,q)$, due to the changing degrees of the vertices, there is no time-invariant stationary distribution, and the traditional notion of mixing time must be adapted to our dynamic setting. Informally, what we ask, then, is how many steps the walk needs to take before the distance to a \emph{time-dependent} stationary distribution becomes \emph{small enough}. Furthermore, in contrast to static graphs, where the distance between the distribution of the walk and the stationary distribution can only decrease, in dynamic graphs the distance to the time-dependent stationary distribution might increase with time. For this reason, we also ask that the distribution of the walk remains close to a time-dependent stationary distribution for a \emph{long enough} interval of time (for a precise definition of our notion of mixing time, see Definition~\ref{def:dynamicmixingtime}). We believe this requirement is necessary for our definition of mixing time to be useful in potential applications.
	
	\subsubsection*{Further Related Work.}
	Recently,  \cite{SpirakisCover18} analysed the cover time of so-called ``Edge-Uniform Stochastically-Evolving Graphs'', that include our model as a special case (i.e., the history is $k=1$). Their focus is on a process called ``Random Walk with a Delay'', where at each step, the walk picks a (possible) neighbor and then waits until the edge becomes present. In \cite[Theorem 4]{SpirakisCover18}, the authors also relate this process to the standard random walk, and prove a worst-case upper bound on the cover time. However, one of the key difference to \cite{SpirakisCover18} is that we will study the {\em mixing time} instead of the {\em cover time}. 
	
	In \cite{sousiThomas}, the authors~analysed a continuous-time version of the edge-Markovian random graph. However, unlike the standard random walk, they consider a slightly different process: when the random walk tries to make a transition from a vertex $u$, it picks one of the $n-1$ other vertices and moves there {\em only if} the edge is present; otherwise it remains in place. For this process, they were able to derive very tight bounds on the mixing time that establish the so-called cutoff phenomena. The same random walk was also analysed on a dynamic graph model of the $d$-dimensional grid in \cite{pss15,PeresSousiSteif}, and, more generally, in \cite{hermonSousi}.

	\subsection{Main Results}
	We study the mixing properties of random walks on edge-Markovian random graphs $\cg(n,p,q)$. 
	In particular, we consider six different settings of parameters $p$ and $q$, which separates edge-Markovian models based on how fast graphs change over time (slowly vs. fast changing), and how dense graphs in the dynamic sequence are (sparse vs. semi-sparse vs dense).

	As noted in previous works (see, e.g., \cite{Clementi2016}), a dynamic sequence sampled from $\cg(n,p,q)$ will eventually converge to an \erdosrenyi random graph $\cg(n, \tilde{p})$ where $\tilde{p} = \frac{p}{p+q}$  (for the sake of completeness, we give a proof in Appendix $\ref{sec:graphchain}$). We use the expected degree in such a random graph, which is equal to $d=(n-1)\tilde{p}$, to separate edge-Markovian models according to their density as follows:
	\begin{enumerate}
		\item \textbf{Sparse} $d = o(\log{n})$
		\item \textbf{Semi-sparse} $d=\Theta(\log{n})$
		\item \textbf{Dense} $d=\omega(\log{n})$.
	\end{enumerate}
	Notice that the sparse regime corresponds to random graphs with density below the connectivity threshold of \erdosrenyi random graphs.
	
	We further separate edge-Markovian models based on how fast they change over time. Let $\delta = \binom{n}{2}\tilde{p}q + \binom{n}{2}(1 - \tilde{p})p$ be the expected number of changes at each step, when starting from a stationary initial graph $G_0 \sim \cg(n,\tilde{p})$. We consider the following two opposite regimes.
	\begin{enumerate}
		\item \textbf{Fast-changing} $\delta = \Theta(dn)$.
		\item \textbf{Slowly-changing} $\delta = O(\log{n})$
	\end{enumerate}
	Notice that the fast-changing regime corresponds to graphs for which a constant fraction of edges change at each step in expectation.
	
	For a cleaner exposition, we consider six different settings based on fast-changing vs. slowly-changing rates and dense vs. medium vs. sparse graphs (Table \ref{tab:results}). The density of the graph refers to the expected degree of the graph sampled from the {\em stationary graph distribution} (Sec.~\ref{sec:notations}) of the model. Moreover this quantifies the expected degree $d = n\tilde{p}$. For reasons of space, we will not analyse all combinations of $p$ and $q$. Also note that some choices of $p$ and $q$ are not interesting, for example, small values of $p$ (and $q$) may trap the walk inside a small region for a long time.
	
	
	
	
	\setlength{\tabcolsep}{4pt}
	\renewcommand*{\arraystretch}{1.1}
	\begin{table}[h]\label{tab:results}
		\centering
		\begin{tabular}{c c c}
			\toprule
			& \textbf{Fast-changing}     & \textbf{Slowly-changing} \\ 
			& \footnotesize{$\delta = \Theta(dn)$}  & \footnotesize{$\delta = O(\log{n})$} \\
			\hline
			\textbf{Sparse}     &  $\tmix = \infty$    &  $\tmix = \Omega(n)$   \\ 
			\footnotesize{$d \in [ 1,o(\log n)]$} & Thm~\ref{thm:fastsparse} & Proposition~\ref{prop:slowsparse}  \\ \hline
			\textbf{Semi-sparse}      &  Coarse mixing\footnotemark~in $O(\log n)$   &  \\
			\footnotesize{$d = \Theta(\log n)$} & Prop~\ref{prop:coarse}  & $\tmix = O(\log n)$,  \\
			\cline{1-2}
			\textbf{Dense}      &  $\tmix = O(\log n)$     & Thm~\ref{thm:slowdense}  \\
			\footnotesize{$d \in [ \omega(\log n),n/2]$} & Thm~\ref{thm:fastdense} &   \\
			\toprule
		\end{tabular}
		\vspace{0.5em}
		\caption{Summary of our main results (informal). See referenced theorems for the precise and complete statements.}
	\end{table} 
	\footnotetext{In this regime we are not able to prove finite mixing time. However, we show that the distribution of the walk will ``flatten out'' after $O(\log{n})$ steps. We refer to this behavior as \emph{coarse} mixing.}
	
	The main results of our work are presented in Table~\ref{tab:results}. Here, we assume $G_0$ is sampled from the stationary graph distribution $\cg(n, \tilde{p})$. In the fast-changing regime, as highlighted in Remark \ref{rem:graphchain}, this is without loss of generality. For slow-changing models, instead, different choices of $G_0$ can result in drastically different outcomes with regard to the mixing time. For ease of presentation, we assume in Table~\ref{tab:results} that $G_0 \sim \cg(n, \tilde{p})$, but this assumption can usually be relaxed, and we refer to the full statement of the corresponding results for our actual assumptions on $G_0$.
	
	Next we formally state the four main results of our work. The formal definitions of mixing time for random walk on dynamic graphs will be presented in \secref{notations} (see in particular Definition \ref{def:dynamicmixingtime} and Definition \ref{def:edgemarkovianmixingtime}). The first theorem is a negative result that tells us  that, for fast-changing and sparse edge-Markovian graphs, random walks don't have finite mixing time. Its proof will be presented in \secref{fastsparse}.
	
	
	\begin{restatable}[Fast-changing and sparse, no mixing]{theorem}{fastsparse}\label{thm:fastsparse}
		Let $p = \Theta \left(\frac{1}{n} \right)$ and $q = \Omega(1)$. Then, $\tmix(\cg(n,p,q)) = \infty$.
	\end{restatable}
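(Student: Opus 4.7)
The plan is to pin down, for every time $t$, a single event that witnesses a constant total-variation gap between the walker's distribution $\mu_t$ and the time-dependent stationary distribution $\pi_t$. The event is ``$X_t$ is isolated in $G_t$'': because $\pi_t(v)\propto\deg_t(v)$, this event has $\pi_t$-mass $0$, so it suffices to show that it has $\mu_t$-mass at least some absolute constant $c>0$. Formally, starting from $G_0\sim\cg(n,\tilde{p})$ and an arbitrary initial vertex $v_0$, I would prove $\Pr{\deg_t(X_t)=0}\geq c$ for every $t\geq 1$. This yields $\Ex{\norm{\mu_t-\pi_t}{\mathrm{TV}}}\geq c$ uniformly in $t$, and hence $\tmix=\infty$ in the sense of \defref{dynamicmixingtime}.

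The key tool is a one-step edge-flip computation. Conditional on the graph history up to $G_{t-1}$ and on the walker's position $X_t$ (which was chosen by a walk step inside $G_{t-1}$), the edges incident to $X_t$ in $G_t$ are independent: each existing edge of $G_{t-1}$ survives with probability $1-q$, and each absent edge appears with probability $p$. Hence
\[
\Pr{\deg_t(X_t)=0\mid G_{t-1},X_t}
 \,=\, q^{\deg_{t-1}(X_t)}(1-p)^{n-1-\deg_{t-1}(X_t)}
 \,=\, (1-p)^{n-1}\bigl(q/(1-p)\bigr)^{\deg_{t-1}(X_t)}.
\]
Because $pn=\Theta(1)$, the prefactor satisfies $(1-p)^{n-1}=e^{-\Theta(1)}=\Omega(1)$. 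When $q\geq 1-p$ the remaining factor is at least $1$ pointwise; when $q<1-p$, the base $q/(1-p)$ is bounded below by a positive constant (using $q=\Omega(1)$), and the map $a\mapsto(q/(1-p))^a$ is convex, so Jensen's inequality reduces the whole statement to a uniform-in-$t$ bound $\Ex{\deg_{t-1}(X_t)}=O(1)$.

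Producing this uniform expected-degree bound is where the real work lies. Heuristically it must hold: $X_t$ is either $X_{t-1}$ (when $X_{t-1}$ was isolated in $G_{t-1}$) or a uniformly random neighbor of $X_{t-1}$ in $G_{t-1}$, and in $\cg(n,\tilde{p})$ with $\tilde{p}=\Theta(1/n)$ the expected degree of any vertex's random neighbor equals $1+(n-2)\tilde{p}=O(1)$. The subtlety is that the walk's history and $G_{t-1}$ are correlated through the earlier graph sequence, so the conditional law of $G_{t-1}$ given the walk history is not literally \erdosrenyi. I would overcome this by combining (i) the marginal stationarity $G_s\sim\cg(n,\tilde{p})$ for every $s\geq 0$ (since $G_0$ is stationary) with (ii) the edge-flip recursion $\Ex{\deg_t(X_t)}=(1-q-p)\,\Ex{\deg_{t-1}(X_t)}+(n-1)p$, obtained by averaging the conditional expectation $\Ex{\deg_t(X_t)\mid G_{t-1},X_t}$; since $|1-q-p|<1$ in our regime this recursion contracts, with fixed point of order $(n-1)p=\Theta(1)$, while $\Ex{\deg_{t-1}(X_t)}$ is bounded by the \erdosrenyi ``friendship paradox'' quantity $1+d=O(1)$ once the size-bias introduced by the walker's step is accounted for. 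Plugging the resulting $O(1)$ degree bound back into Jensen's inequality gives $\Pr{\deg_t(X_t)=0}\geq c$ for all $t\geq 1$, completing the proof.
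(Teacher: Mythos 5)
Your overall skeleton is sound and genuinely different from the paper's: you exhibit, at \emph{every} time $t$, an explicit total-variation witness (the set of isolated vertices, which has $\pi_t$-mass $0$ but, you claim, $\mu_t$-mass $\Omega(1)$), whereas the paper argues a one-step ``de-mixing'': assuming $\mu_t\approx\pi_t$, it shows that for $\Omega(n)$ vertices $u$ possessing a low-degree neighbor $v_{\min}$, the resampling of $v_{\min}$'s degree makes $\mu_{t+1}(u)-\pi_{t+1}(u)=\Omega(1/|E|)$ with constant probability, and sums these contributions. The paper's route only ever needs degree statistics of a \emph{typical} vertex of a stationary $\cg(n,\tilde{p})$ graph, which is where your route runs into trouble. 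Your one-step edge-flip identity and the Jensen reduction are both correct; the entire burden is the uniform-in-$t$ bound $\Ex{\deg_{t-1}(X_t)}=O(1)$ for the \emph{walker's} position, and that is exactly the step that is not proven.

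The gap is concrete. First, your recursion $\Ex{\deg_t(X_t)}=(1-q-p)\Ex{\deg_{t-1}(X_t)}+(n-1)p$ relates the degree of the same vertex $X_t$ at two consecutive times; it does not connect $\Ex{\deg_{t-1}(X_t)}$ to $\Ex{\deg_{t-2}(X_{t-1})}$, so it is not a recursion in $t$ and cannot be ``contracted'' on its own. Closing it requires bounding the size-bias incurred when the walk steps from $X_{t-1}$ to a uniformly random $G_{t-1}$-neighbor, and the friendship-paradox computation $1+(n-2)\tilde{p}$ that you invoke is only valid when the neighbor's remaining edge slots are unconditioned Bernoulli$(\tilde{p})$. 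Here they are not: conditional on the walk's trajectory $(X_0,\dots,X_{t-1})$, the law of $G_{t-1}$ is tilted (each traversed edge was present and persists with probability $1-q$ per step, the walk's uniform-neighbor choices reveal degree information along the path, and these correlations propagate through the persistent component of the chain). For a general conditional law, the expected degree of a random neighbor can be far larger than the average degree --- that is the content of the friendship paradox --- so ``once the size-bias is accounted for'' is precisely the missing argument, not a routine adjustment. What would rescue it is an explicit use of $q=\Omega(1)$: e.g., a coupling with the memoryless case $q=1$ in which a constant fraction of $X_t$'s incident edge slots are freshly resampled each round, or an unrolling of the neighborhood exploration in which each level of conditioning contributes a factor $(1-q)$ and the resulting series is summable. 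Either of these is a real lemma that must be stated and proved; without it, the central inequality $\Pr{\deg_t(X_t)=0}\ge c$ is asserted rather than established. (Two minor points that are fine as written: the reduction from $\Ex{\norm{\mu_t-\pi_t}{TV}}\ge c$ to a constant-probability lower bound on $\norm{\mu_t-\pi_t}{TV}$, needed to negate Definition~\ref{def:edgemarkovianmixingtime}, follows from reverse Markov since the total variation distance is bounded by $1$; and the paper's convention $\pi_t(u)=\deg_t(u)/(2|E_t|)$ does assign mass $0$ to isolated vertices, so your witness set is legitimate.)
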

	
	The following theorem is, instead, a positive result that establishes fast mixing time in the dense and fast-changing regime. Its proof is presented in \secref{fastdense}. 
	
	\begin{restatable}[Fast-changing and dense, fast mixing]{theorem}{fastdense}\label{thm:fastdense}
		Let $p = \omega\left(\log{n}/n\right)$ and $q = \Omega(1)$. Then,
		$\tmix(\cg(n,p,q)) = O(\log{n})$.
	\end{restatable}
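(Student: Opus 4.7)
The plan is to exploit that in the fast-changing dense regime every snapshot $G_t$ is marginally distributed as $\cg(n, \tilde{p})$ with expected degree $d = (n-1)\tilde{p} = \omega(\log n)$, so each looks like a well-behaved dense \erdosrenyi graph: vertex degrees are tightly concentrated around $d$, the transition matrix enjoys a strong spectral gap, and the stationary distribution $\pi_t$ is uniformly close to the uniform distribution across all times $t$.

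First I would carry out a concentration step. Fix a horizon $T = C \log n$ for a sufficiently large constant $C$. With probability $1 - o(1)$, the graphs $G_0, \ldots, G_T$ simultaneously satisfy (i) every vertex has degree $(1 \pm \epsilon)d$ in every $G_t$, for some $\epsilon = o(1)$, and (ii) the absolute second eigenvalue of the random-walk transition matrix $P_t$ is at most some $\lambda = o(1)$. Each property is standard for a single $\cg(n, \tilde{p})$, via Chernoff bounds on binomial degrees and the classical spectral estimate for dense \erdosrenyi graphs; since the marginal distribution of every $G_t$ is $\cg(n, \tilde{p})$ under the stationary graph initialisation, a union bound over $O(n \log n)$ bad events yields the joint statement over the entire horizon.

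Conditional on this good event, the core of the argument is a per-step $L^2$ contraction. Writing $\mu_t$ for the walk's distribution at step $t$, I would decompose
\begin{equation*}
\mu_{t+1} - \pi_{t+1} \;=\; (\mu_t - \pi_t)\,P_{t+1} \;+\; (\pi_t P_{t+1} - \pi_{t+1}),
\end{equation*}
bound the first summand by $\lambda\, \|\mu_t - \pi_t\|_{L^2(\pi_t)}$ via the spectral gap of $P_{t+1}$, and control the second using that $\pi_t$ and $\pi_{t+1}$ both lie within $(1 \pm O(\epsilon))$ of the uniform distribution, which makes $\pi_t P_{t+1} - \pi_{t+1}$ of small norm. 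This produces a recursion of the form
\begin{equation*}
\|\mu_{t+1} - \pi_{t+1}\|_{L^2(\pi_{t+1})} \;\le\; \lambda\, \|\mu_t - \pi_t\|_{L^2(\pi_t)} \;+\; O\!\left(\tfrac{\epsilon}{\sqrt{n}}\right).
\end{equation*}
Starting from the worst-case $\|\mu_0 - \pi_0\|_{L^2(\pi_0)} = O(\sqrt{n})$ (point-mass initialisation) and iterating for $T = O(\log n)$ steps drives the deviation below a small constant, which by Cauchy--Schwarz translates into $\|\mu_T - \pi_T\|_{TV} \le 1/4$. Running the same recursion for a further $\Theta(\log n)$ steps certifies that the walk remains close to $\pi_t$ throughout the time window required by Definition~\ref{def:edgemarkovianmixingtime}.

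The main obstacle is the time-dependence of $\pi_t$: in a static chain $\pi$ is a genuine eigenvector of $P$, giving pure spectral contraction, whereas here the additive discrepancy $\pi_t P_{t+1} - \pi_{t+1}$ appears at every step and the reference measure defining the $L^2$ norm itself shifts with $t$. The density assumption $d = \omega(\log n)$ is exactly what keeps this error term and the associated change-of-measure factors $(1 \pm O(\epsilon))$ small enough to be absorbed by the $\lambda$-contraction over an $O(\log n)$ horizon, letting the spectral gap drive the iteration to mixing.
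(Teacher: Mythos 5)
Your overall strategy coincides with the paper's: condition on every snapshot $G_t$ in an $O(\log n)$ window having concentrated degrees (so that $\pi_t$ is within a $1+o(1)$ factor of uniform) and a strong spectral gap, then run a one-step $\ell_2$ contraction that absorbs the time-variation of $\pi_t$ as a small perturbation, and finish with a union bound over the window. The paper packages the perturbation differently --- it converts $\norm{\mu_t/\pi_t-\mathbf{1}}{2,\pi_t}$ into the distance to the \emph{fixed} uniform distribution $\mathbf{1/n}$, contracts that quantity, and converts back --- whereas you keep the time-varying $\pi_t$ as the reference measure and bound the drift term $\pi_t P_{t+1}-\pi_{t+1}=(\pi_t-\pi_{t+1})P_{t+1}$ explicitly. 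These are equivalent bookkeeping choices, and both hinge on the same fact that the constant $C$ in $1/(Cn)\le\pi_t(x)\le C/n$ is $1+o(1)$ in the dense regime.

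Two points need repair. The substantive one: the walk in this paper is \emph{lazy}, and for a lazy transition matrix $P_t=(I+Q_t)/2$ one has $\lambda_2(P_t)\ge 1/2$, so your concentration claim (ii), that the second eigenvalue of $P_t$ is $o(1)$, can never hold; the $o(1)$ eigenvalue bound for dense Erd\H{o}s--R\'{e}nyi graphs applies to the \emph{simple} walk matrix $Q_t$. The paper flags exactly this issue (``a strong expansion condition on lazy random walks can never be satisfied'') and circumvents it by conditioning on whether each step is lazy: lazy steps leave $\norm{\mu_t-\mathbf{1/n}}{2}$ unchanged, and w.h.p.\ a constant fraction of the first $C'\log n$ steps are non-lazy, which suffices. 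Your recursion survives if you replace $\lambda=o(1)$ by $\lambda_2(P_t)\le 1/2+o(1)$, since any contraction factor bounded away from $1$ still yields mixing in $O(\log n)$ steps and the additive drift still sums to a geometric series; but as written the assumption is false. The minor point: with the $\chi^2$-type normalization you are implicitly using (a point mass has norm $\Theta(\sqrt n)$), the drift term has squared norm $\sum_x(\pi_t(x)-\pi_{t+1}(x))^2/\pi_{t+1}(x)=n\cdot O(\epsilon^2/n^2)\cdot\Theta(n)=O(\epsilon^2)$, so the additive error per step is $O(\epsilon)$ rather than $O(\epsilon/\sqrt n)$. Since $\epsilon=o(1)$ this still drives the total variation distance to $o_n(1)$, as required by Definition~\ref{def:dynamicmixingtime}, so the conclusion is unaffected.
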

	
	The only case missing in the fast-changing regime is the semi-sparse case, where nodes have average degree $d = \Theta(\log{n})$. We do not have a definitive answer on the mixing time of random walks in such case, however, we do have a partial result that guarantees at least that random walk distributions will be ``well spread'' over a large support after $O(\log{n})$ steps (we call this behavior \emph{coarse mixing}). This statement can be made formal by considering the $\ell_2$-norm of the distribution of the walk. Because of its technical nature, we defer the formal statement to Proposition~\ref{prop:coarse}.
	
	We now turn our attention to the slowly-changing regime, where at most $\delta = O(\log n)$ edges are created and removed at each step. Unlike the results for the fast-changing regime,  where the choice of the starting graph $G_0$ does not really affect the mixing time of a random walk (see Appendix \ref{sec:graphchain} and Remark \ref{rem:graphchain} for a discussion), in the slowly-changing regime the choice of $G_0$ will affect the properties of $G_t$ for a large number of steps $t$.
	
	The following theorem shows that in the slowly-changing and dense regime, under mild conditions on the starting graph $G_0=(V,E_0)$ (which are satisfied for $G_0$ drawn from the limiting distribution of dense $\cg(n,p,q)$), random walks will mix relatively fast. We use $E_0(S,V\setminus S)$ to indicate the set of edges in $G_0$ between a subset of vertices $S \subset V$ and its complement, and $\Phi_{G_0}$to indicate the minimum conductance of $G_0$ (see Definition \ref{def:conductance}).
	
	
	\begin{restatable}[Slowly-changing and dense, fast mixing]{theorem}{slowdense}\label{thm:slowdense}
		Let $d = \Omega(\log{n})$, $p=O(\log n/n^2)$, and $q=O(\log n/(dn))$. Let the following assumptions on the starting graph $G_0 = (V,E_0)$ be satisfied for large enough constants $c_1, c_2,c_3>0$.
		\begin{enumerate}[(1)]
			\item $\deg_0(x) = \Theta(d)$ for any $x \in V$;
			\item $|E_0(S,V\setminus S)| \ge c_2 \log{n} |S|$, for any $S \subset V$ with $|S| \le c_1 \log n$;
			\item $\Phi_{G_0} \ge c_3 \log d/d$.
		\end{enumerate}
		Then, $\tmix(\cg(n,p,q)) = O(\log n/\Phi_{G_0}^2)$.
	\end{restatable}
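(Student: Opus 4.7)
The plan is to show that in the slow-changing regime the structural properties (1)--(3) of $G_0$ are inherited, up to constant factors, by every graph $G_t$ with $t\le T:=c\log n/\Phi_{G_0}^2$ for a suitably large constant $c$, and then to apply a Cheeger-type step-wise contraction argument to a slowly drifting time-dependent stationary distribution $\pi_t$.

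The first and most delicate step is to argue that with probability $1-o(1)$, every $G_t$ with $t\le T$ still satisfies $\deg_t(v)=\Theta(d)$ for every $v$, $|E_t(S,V\setminus S)|\ge\tfrac{c_2}{2}\log n\,|S|$ for every $S\subset V$ with $|S|\le c_1\log n$, and $\Phi_{G_t}\ge\Phi_{G_0}/2$. Since $q=O(\log n/(dn))$ and $p=O(\log n/n^2)$, any fixed edge is flipped by time $t\le T$ with probability at most $O(qT)$, and assumption (3) combined with a large enough $c_3$ makes this quantity an arbitrarily small constant. Degree stability is then a standard Bernstein bound applied to the sum of the independent edge-Markov chains incident to each vertex. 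Small-set expansion is preserved by a union bound over the $n^{O(\log n)}$ subsets of size at most $c_1\log n$: for a fixed such $S$, the number of edges of $E_0(S,V\setminus S)$ absent at time $T$ is stochastically dominated by $\mathrm{Bin}(|E_0(S,V\setminus S)|,O(qT))$, and the quasi-polynomial union-bound loss is absorbed by taking $c_2$ sufficiently large in assumption (2). The global conductance bound then splits into small $S$ (handled as above) and large $S$, where the boundary $\Omega(d|S|)$ dwarfs the total number $O(T\log n)$ of edge flips in $[0,T]$, so concentration prevents losing more than a small constant fraction of any such cut.

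Conditional on this structural event, each lazy transition matrix $P_t$ on $G_t$ has conductance $\Omega(\Phi_{G_0})$, so Cheeger's inequality yields a spectral gap of order $\Phi_{G_0}^2$ and hence
\[
  \nor{\mu P_t - \pi_t}{\mathrm{TV}} \le \bigl(1-\Omega(\Phi_{G_0}^2)\bigr)\nor{\mu-\pi_t}{\mathrm{TV}}
\]
for every distribution $\mu$; one passes through the $\pi_t$-weighted $\ell_2$ norm, whose equivalence with the uniform $\ell_2$ norm up to a constant factor is ensured by the balance of the degrees. In addition, since only $O(\log n)$ edges are flipped per step while every vertex has degree $\Theta(d)$, the stationary distributions drift slowly: $\nor{\pi_{t+1}-\pi_t}{\mathrm{TV}}=O(\log n/(dn))$.

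Telescoping these two estimates along $t=0,\dots,T-1$ gives
\[
  \nor{\mu_0 P_0\cdots P_{T-1}-\pi_T}{\mathrm{TV}} \le \bigl(1-\Omega(\Phi_{G_0}^2)\bigr)^T + T\cdot O\!\left(\frac{\log n}{dn}\right),
\]
and for $T=c\log n/\Phi_{G_0}^2$ with $c$ large enough both terms are $o(1)$: the first by the choice of $c$, the second because $T\log n/(dn)=O(\log^2 n/(dn\Phi_{G_0}^2))=o(1)$ using $\Phi_{G_0}\ge c_3\log d/d$ and $d=\Omega(\log n)$. Continuing the induction for a polynomial number of further steps shows that the walk stays close to $\pi_t$ long enough to satisfy Definition~\ref{def:dynamicmixingtime}. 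The main obstacle in the whole argument is the small-set structural-stability step: one has to union-bound against rare Markov-chain events over a quasi-polynomial family of subsets, and this only closes because the slow-changing regime forces $qT$ to be an arbitrarily small constant while $c_2$ can be chosen as large as desired. Once this structural event is in place, the rest of the argument is essentially the standard static-graph Cheeger analysis adapted to a slowly time-varying Markov chain.
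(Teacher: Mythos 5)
Your high-level strategy matches the paper's---preserve the structural properties of $G_0$ over the relevant time window, convert conductance into a spectral gap via Cheeger, and telescope a per-step contraction against a slowly drifting $\pi_t$---but two steps have genuine gaps. First, the displayed inequality $\nor{\mu P_t - \pi_t}{\mathrm{TV}} \le (1-\Omega(\Phi_{G_0}^2))\nor{\mu-\pi_t}{\mathrm{TV}}$ is false as stated: a single lazy step does not contract total variation geometrically in the spectral gap (TV is merely non-increasing under $P_t$). The geometric decay $\lambda_2(P_t)^2 \le 1-\Omega(\Phi_{G_t}^2)$ holds only for the $\chi^2$-type quantity $\norm{\mu/\pi_t - \mathbf{1}}{2,\pi_t}^2$ (\lemref{spectral}), and once you work in that norm the drift of the stationary distribution is not an additive TV perturbation but a change of the \emph{weights} of the norm itself. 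One must bound $\norm{\mu_{t+1}/\pi_{t+1}-\mathbf{1}}{2,\pi_{t+1}}^2 - \norm{\mu_{t+1}/\pi_{t}-\mathbf{1}}{2,\pi_{t}}^2$ directly; the paper's Step 2 shows this is at most $O(\log n/n)\bigl(\norm{\mu_{t+1}/\pi_t-\mathbf{1}}{2,\pi_t}^2+1\bigr)$ in expectation, i.e.\ a multiplicative $(1+O(\log n/n))$ loss plus an additive term, and only then does the recursion close. Your telescoped bound has the right shape, but the inequality it rests on must be re-derived in $\ell_2(\pi_t)$ before the final conversion to TV.

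Second, your structural-stability argument does not close for large sets when $d$ is small (the hypothesis permits $d=\Theta(\log n)$ or $d=\mathrm{polylog}(n)$). For $|S|=\omega(\log n)$ the cut has only $|E_0(S,V\setminus S)|=\Omega(\Phi_{G_0}\,d\,|S|)=\Omega(|S|\log d)$ edges, so Chernoff gives a per-set failure probability of only $\exp(-\Omega(|S|\log d))$, which cannot beat a union bound over all $\binom{n}{|S|}=\exp(\Omega(|S|\log n))$ subsets of that size; your remark that the boundary ``dwarfs the total number of edge flips'' is also untrue for $|S|$ just above $\log n$ when $d$ is large, since $T\log n$ can far exceed $|S|\log d$. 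The paper circumvents exactly this via \lemref{conductanceconnect} and \lemref{connectedsets}: conductance is attained on \emph{connected} sets, of which there are only $n\Delta^{O(|S|)}=\exp(O(|S|\log d))$, so the per-set bound suffices once $c_3$ is large. You need this (or an equivalent entropy reduction) to finish. Two smaller points: your ``$qT$ is a small constant'' device only yields stability on $[0,T+\sqrt n]$, which suffices for Definition~\ref{def:dynamicmixingtime} but not for the ``polynomial number of further steps'' you invoke at the end---over $\mathrm{poly}(n)$ steps every edge flips many times, and the paper instead uses the birth-and-death-chain equilibrium argument of \lemref{lowerboundboundary} to get stability over $O(nd\log n)$ steps; and for $d=\Theta(n)$ your $qT$ is only $O(c/c_3^2)$, so its smallness already presupposes $c_3$ large relative to the constant $c$ you choose in $T$, which should be made explicit.
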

	Let us briefly discuss the assumptions and results of \thmref{slowdense}.First of all notice that the parameters $p$ and $q$ are defined so that the average degree $d = \Omega(\log(n))$ and the number of changes in the graph at each step is $\delta = O(\log(n))$. Assumption (1) just require the degree of the vertices in $G_0$ to be of the same order as the degree of the vertices in the limiting graph $\cg(n,\tilde{p})$. Assumption (2) guarantees that for any small set $S$ there are enough edges going from $S$ to the rest of the graph. Assumption (3) is a mild condition on the conductance of $G_0$. These two conditions ensure that the conductance of $G_t$ will not be much lower than the conductance of $G_0$ for a large number of steps $t$. Finally, notice that $O(\log n/\Phi_{G_0}^2)$ is a classic bound for the mixing time of a \emph{static} random walk on $G_0$.
	\thmref{slowdense} essentially states that, if the three assumptions are satisfied, the mixing time of a random walk on $\cg(n,p,q)$ will not be much larger. In particular, all the three assumptions are satisfied for a starting graph $G_0 \sim \cg(n,\tilde{p})$ with $\tilde{p}=p/(p+q)$. Furthermore, in such case $\tmix(\cg(n,p,q)) = O(\log n)$. The proof of this theorem can be found in \secref{slowdense}.
	
	We conclude this section by stating our result in the slowly-changing and dense regime. We prove a negative result: we show that the mixing time of $\cg(n,p,q)$ is at least linear in $n$. 
	
	\begin{restatable}[Slowly-changing and sparse, slow mixing]{proposition}{slowsparse}\label{prop:slowsparse}
		Let  $p = O(1/n^2)$ and $q = \omega(1/(n \log n))$. Consider a random walk on $\cg(n,p,q)$ with starting graph $G_0 \sim \cg(n,\tilde{p})$ with $\tilde{p} = p/(p+q)$. Then, $\tmix(\cg(n,p,q))=\Omega(n)$.
	\end{restatable}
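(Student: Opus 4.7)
The plan is to exploit the fact that $G_0 \sim \cg(n,\tilde{p})$ is extremely sparse: since $\tilde{p} = p/(p+q) \le p/q = O(1/n^2)\cdot o(n\log n) = o(\log n/n)$, the expected degree is $d=(n-1)\tilde{p}=o(\log n)$, which is well below the connectivity threshold of \erdosrenyi graphs. Hence $G_0$ will contain many isolated vertices, and if we start the walk at such a vertex, it will remain trapped there for $\Omega(n)$ steps, because under the slowly-changing dynamics no edge incident to it can be created quickly enough.

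The first step is to establish the existence of isolated vertices in $G_0$. The expected number of such vertices is $n(1-\tilde{p})^{n-1}\ge n\exp(-(n-1)\tilde p(1+o(1))) = n^{1-o(1)}$, and a routine second-moment calculation (as in standard \erdosrenyi analysis) shows that with probability $1-o(1)$, $G_0$ contains at least one (in fact, many) isolated vertex. Condition on this event and fix an isolated vertex $v\in V$ as the starting point of the walk.

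Next, I would bound the probability that $v$ gains an edge in the first $t$ steps. Since $v$ is isolated in $G_0$, each of the $n-1$ edge-slots incident to $v$ is in the \emph{off} state at time $0$. For a single such slot, the probability that it switches to \emph{on} at any given step is at most $p$, so by a union bound over $t$ steps the probability that this slot is ever on during $\{1,\dots,t\}$ is at most $tp$. Another union bound over the $n-1$ slots yields
\begin{equation*}
\Pr{v \text{ has degree}\ge 1 \text{ in some } G_s,\ s\le t} \le (n-1)tp = O(t/n).
\end{equation*}
Choosing $t = cn$ for a sufficiently small constant $c>0$, this probability is at most $1/4$. Thus with probability $\ge 3/4$ over the graph evolution, $v$ is isolated in every $G_s$ with $s\le cn$, and consequently the walk's distribution at time $cn$ is the point mass $\delta_v$.

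Finally, since $\pi_t(v)=\deg_t(v)/(2|E_t|)=0$ whenever $v$ is isolated in $G_t$, the total-variation distance $\|\delta_v - \pi_t\|_{\mathrm{TV}}$ equals $1$ for all $t\le cn$ on the event above. This far exceeds the threshold in Definition~\ref{def:dynamicmixingtime}, and therefore $\tmix(\cg(n,p,q)) \ge cn = \Omega(n)$. The only non-routine step is confirming compatibility with the precise form of the dynamic mixing-time definition: one must check that the lower-bound regime of Definition~\ref{def:dynamicmixingtime} (and the corresponding notion in Definition~\ref{def:edgemarkovianmixingtime}) permits the starting-vertex choice to depend on $G_0$ and that failure at a single large-enough time is sufficient, but since the obstruction persists over the entire interval $[0,cn]$ with high probability this should follow directly from the definitions.
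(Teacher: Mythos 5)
Your proposal is correct and follows essentially the same route as the paper's proof: locate an isolated vertex in $G_0 \sim \cg(n,\tilde{p})$ (which exists w.h.p.\ since $\tilde{p}=o(\log n/n)$ is below the connectivity threshold), start the walk there, show via a union bound over the $n-1$ incident edge slots and $t$ steps that the vertex stays isolated for $t = cn$ steps with constant probability, and conclude that $\norm{\mu^v_t - \pi_t}{TV} = 1$ since $\pi_t(v)=0$. The paper's version is slightly terser (it bounds the isolation probability directly as $(1-p)^{(n-1)t} \ge 1 - O(t/n)$ rather than via two union bounds), but the argument is the same.
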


	\section{Notation and Definitions}
	
	\subsection{Random Walk and Conductance}\label{sec:notations}
	In this section we introduce the relevant notation and basic results about Markov chains that we will use throughout the paper. For more background on Markov chains and random walks we defer the reader to~\cite{levin2017markov}.
	
	Let  $\sg = (G_t)_{t \in \mathbb{N}}$ be a sequence of undirected and unweighted graphs defined on the same vertex set $V$, with $|V| = n$, but with potentially different edge-sets $E_t$ ($t \in \mathbb{N}$). 
	We study (lazy) random walks on $\sg$: suppose that at a time $t\ge 0$ a particle occupies a vertex $u \in V$. At step $t+1$ the particle will remain at the same vertex $u$ with probability $1/2$, or will move to a random neighbor of $u$ in $G_t$. In other words, it will perform a single random walk step according to a transition matrix $P_t$, which is the transition matrix of a lazy random walk on $G_t$: $P_t(u,u) =1/2 $, $P_t(u,v) = 1/(2\deg_t(u))$ (where $\deg_t(u)$ is the degree of $u$ in $G_t$) if there  is an edge between $u$ and $v$ in $G_t$ , or $P_t(u,v) = 0$ otherwise. 
	
	Given an initial probability distribution $\mu_0 \colon V \to [0,1]$, which is the distribution of the initial position of the walk, the $t$-step distribution of a random walk on $\sg$ is equal to $\mu_t = \mu_0 P_1 \cdot P_2 \cdot \ldots \cdot P_t$. In particular, we use $\mu_t^x$ to denote the $t$-step distribution of the random walk starting at a vertex $x \in V$. Hence $\mu_0^x(x) = 1$ and $\mu_0^x(y) = 0$ for $x \ne y \in V$.  
	Furthermore, we use $\pi_t$ to denote the probability distribution with entries equal to $\pi_t(x)= \deg_t(x)/(2|E_t|)$ for any $x \in V$. This distribution is stationary for $P_t$ (i.e, it satisfies $\pi_t P_t = \pi_t$) and, if $G_t$ is connected, it is the unique stationary distribution of $P_t$. If $G_t$ is disconnected, $P_t$ will have multiple stationary distribution. However, unless stated otherwise, we will consider only the ``canonical'' stationary distribution $\pi_t$.  Finally, while any individual $P_t$ is \emph{time-reversible} (it satisfies $\pi_t (x) P_t(x,y) = \pi_t(y) P_t(y,x)$ for any $x,y \in V$), a random walk on $\mathcal{G}$ may not.
	\footnote{For example, it might happen that $P_1 \cdots P_t(x,y) > 0$ while $P_1 \cdots P_t(y,x) = 0$. This cannot happen in the ``static'' case where $P_1 = \cdots = P_t = P$ with $P$ reversible.}
	
	Recall that if $P$ is a transition matrix of a reversible Markov chain, it has $n$ real eigenvalues, which we denote with $-1 \le \lambda_n(P) \le \cdots \le \lambda_1(P) =1$. If $P$ is the transition matrix of a lazy random walk on a graph $G$, it holds that $\lambda_n(P) \ge 0$. Moreover, $\lambda_1(P) < 1$ if and only if $G$ is connected

	For two probability distributions $f,g \colon V \to [0,1]$, the \emph{total variation distance} between $f$ and $g$ is defined as $\norm{f - g}{TV} := \frac{1}{2} \sum_{x \in V} \abs{f(x) - g(x)}$. We denote with $\norm{f}{2} = \left( \sum_{x \in V} f^2(x) \right)^{1/2}$ and  $\norm{f}{\infty} = \max_{x \in V} |f(x)|$ the standard $\ell_2$ and $\ell_{\infty}$ norms of $f$. Given a probability distribution $\pi \colon V \to \mathbb{R}_+$, we also define the 
	$\ell_2(\pi)$-norm as $\norm{f}{2, \pi} := \sqrt{ \sum_{x \in V} f^2(x) \pi(x)}$. 
	By Jensen's inequality, it holds for any $f,g$ that $\norm{f - g}{TV} \le \norm{f-g}{2, \pi}$. The  lemma below relates the decrease in the distance to stationarity after one random walk step to the spectral properties of its transition matrix.
	
	\begin{lemma}[Lemma 1.13 in \cite{montenegro2006mathematical}, rephrased] \label{lem:spectral}
		Let $P$ be the transition matrix of a lazy random walk on a graph $G=(V,E)$ with stationary distribution $\pi$. Then, for any $f : V \to \Real $, we have that 
		\[
		\norm{\frac{fP}{\pi} - \mathbf{1}}{2,\pi}^2 \le \lambda_2(P)^2 \norm{\frac{f}{\pi} - \mathbf{1}}{2,\pi}^2.
		\]
	\end{lemma}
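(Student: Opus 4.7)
The plan is to prove the inequality by viewing $P$ as a self-adjoint operator on $\ell_2(\pi)$ and invoking the spectral theorem. Because $P$ is the transition matrix of a lazy random walk, it is reversible: $\pi(x) P(x,y) = \pi(y) P(y,x)$ for all $x, y \in V$. This detailed balance is exactly what makes the operator $T$ defined on column functions by $(Th)(x) := \sum_y P(x,y) h(y)$ self-adjoint with respect to the inner product $\langle h, k\rangle_\pi := \sum_{x \in V} h(x) k(x) \pi(x)$. Consequently, $T$ admits an orthonormal eigenbasis $\phi_1, \ldots, \phi_n$ in $\ell_2(\pi)$ with real eigenvalues $1 = \lambda_1(P) \ge \lambda_2(P) \ge \cdots \ge \lambda_n(P) \ge 0$ (nonnegativity using laziness), with $\phi_1 \equiv \mathbf{1}$.

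The key computation is to rewrite the left-hand side as $T$ applied to a column function. Using reversibility,
\[
\frac{(fP)(x)}{\pi(x)} = \sum_y f(y) \cdot \frac{P(y,x)}{\pi(x)} = \sum_y \frac{f(y)}{\pi(y)} \cdot \frac{\pi(y) P(y,x)}{\pi(x)} = \sum_y P(x,y)\,\frac{f(y)}{\pi(y)} = \left(T \tfrac{f}{\pi}\right)(x),
\]
so the row-vector action $f \mapsto fP/\pi$ coincides with $T$ acting on the function $f/\pi$. Since $T\mathbf{1} = \mathbf{1}$, setting $h := f/\pi - \mathbf{1}$ turns the claim into the operator bound $\norm{Th}{2,\pi}^2 \le \lambda_2(P)^2 \norm{h}{2,\pi}^2$.

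To close, I verify that $h$ is orthogonal to the top eigenvector and then apply the spectral theorem. For the case of interest where $f$ is a probability distribution, $\langle h, \mathbf{1}\rangle_\pi = \sum_x (f(x)/\pi(x) - 1)\pi(x) = \sum_x f(x) - 1 = 0$, so $h$ lies in the span of $\phi_2, \ldots, \phi_n$. Expanding $h = \sum_{i\ge 2} c_i \phi_i$ and applying $T$ yields $Th = \sum_{i \ge 2} c_i \lambda_i(P)\, \phi_i$, and Parseval's identity gives
\[
\norm{Th}{2,\pi}^2 = \sum_{i \ge 2} c_i^2\, \lambda_i(P)^2 \le \lambda_2(P)^2 \sum_{i \ge 2} c_i^2 = \lambda_2(P)^2\, \norm{h}{2,\pi}^2,
\]
which is the desired inequality. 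The only conceptual step is the reversibility-driven identification $fP/\pi = T(f/\pi)$; once that substitution is in place, the bound reduces to the standard fact that a self-adjoint contraction acts on vectors orthogonal to its top eigenspace by at most its second-largest eigenvalue, so no additional obstacle remains. An alternative route is to symmetrize $P$ by conjugating with $\mathrm{diag}(\sqrt{\pi})$ and apply a standard $\ell_2$ spectral bound, but I prefer the direct argument above since it keeps the role of $\lambda_2(P)$ and of the stationary direction $\mathbf{1}$ visible throughout.
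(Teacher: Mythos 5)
Your argument is correct, and note that the paper itself gives no proof of this lemma at all --- it is imported as Lemma 1.13 of the cited Montenegro--Tetali survey --- so there is nothing internal to compare against. Your route (use reversibility to identify the row action $f \mapsto fP/\pi$ with the self-adjoint operator $T$ applied to $f/\pi$, then apply the spectral theorem on the orthogonal complement of $\mathbf{1}$) is the standard proof, and invoking laziness to get $\lambda_n(P) \ge 0$, hence $\lambda_i(P)^2 \le \lambda_2(P)^2$ for all $i \ge 2$, is exactly the right place to use that hypothesis.

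One point deserves emphasis, and you only half-acknowledge it with the phrase ``for the case of interest where $f$ is a probability distribution.'' The orthogonality $\langle f/\pi - \mathbf{1}, \mathbf{1}\rangle_\pi = \sum_x f(x) - 1 = 0$ requires the normalization $\sum_x f(x) = 1$; it is not available ``for any $f : V \to \Real$'' as the lemma is phrased. Indeed the statement as written is false in that generality: take $f = 2\pi$ on a connected graph, so that $fP/\pi - \mathbf{1} = f/\pi - \mathbf{1} = \mathbf{1}$, making the left-hand side equal to $1$ while the right-hand side is $\lambda_2(P)^2 < 1$. Your proof therefore establishes the lemma under the implicit hypothesis $\sum_x f(x) = 1$, which is the only way it is ever used in the paper (it is always applied to walk distributions $\mu_t$); you should promote that restriction from a parenthetical to an explicit hypothesis, since without it the component of $f/\pi - \mathbf{1}$ along the top eigenfunction survives with eigenvalue $1$ and the claimed contraction by $\lambda_2(P)$ cannot hold.
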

	
	In the lemma above and throughout the paper, a division between two functions is to be understood entry-wise, while $\mathbf{1}$ refers to a function always equal to one.
	An important quantity which can be used to obtain bounds on $\lambda_2(P)$ is the \emph{conductance} of $G$, which is defined as follows.
	
	\begin{definition}\label{def:conductance}
		The conductance of a non-empty set $S \subseteq V$ in a graph $G$ is defined as:
		\[
		\Phi_G(S) := \frac{|E(S, V \setminus S)|}{\vol(S)},
		\]
		where $\vol(S) := \sum_{x \in V} \deg(x)$ and $E(S, V \setminus S)$ is the set of edges between $S$ and $V \setminus S$. The conductance of the entire graph $G$ is defined as
		\[
		\Phi_G := \min_{\substack{S \subset V \colon \\ 1 \leq \vol(S) \le \vol(V)/2}} \frac{|E(S, V \setminus S)|}{\vol(S)}.
		\]
	\end{definition}
	
	The conductance of $G$ and the second largest eigenvalue of the transition matrix $P$ of a lazy random walk in $G$ are related by the so-called discrete Cheeger inequality~\cite{alonMilman}, which we state below.
	
	\begin{theorem}[Cheeger inequality]\label{thm:cheeger}
		Let $P$ be the transition matrix of a lazy random walk on a graph $G$. Then, it holds that
		\[
		1 - \lambda_2(P) \le \Phi_G \le 2\sqrt{1 - \lambda_2(P)}.
		\]
	\end{theorem}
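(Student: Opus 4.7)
The plan is to exhibit a starting vertex from which the lazy walk is trapped for $\Omega(n)$ steps, keeping $\nor{\mu_{t}^{v}-\pi_{t}}{TV}=1$ throughout a window of that length. Because $p=O(1/n^{2})$ and $q=\omega(1/(n\log n))$, we have $\tilde p=p/(p+q)\le p/q=o(\log n/n)$, so $G_{0}\sim\cg(n,\tilde p)$ lies well below the connectivity threshold of \erdosrenyi random graphs and contains many isolated vertices. The smallness of $p$ also guarantees that any particular isolated vertex in $G_{0}$ remains isolated for $\Omega(n)$ consecutive steps with constant probability, so the walk cannot escape in that many steps and stays a point mass on an atom of zero $\pi_{t}$-mass.

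In more detail, I would first compute that the expected number of isolated vertices in $G_{0}$ is $n(1-\tilde p)^{n-1}=n^{1-o(1)}$, and use a standard second-moment argument (the relevant pairwise covariance is $(1-\tilde p)^{2n-3}\tilde p$, which is lower order than the variance's diagonal contribution) to conclude that $G_{0}$ has at least one isolated vertex with probability $1-o(1)$; call it $v$. In the edge-Markovian dynamics, the $n-1$ edge-slots incident to $v$ evolve as independent two-state chains, so, conditional on all of them being in the \emph{off} state at time $0$, the probability that they all remain off throughout $[0,T]$ is exactly $(1-p)^{(n-1)T}$. Plugging $p\le C/n^{2}$ and $T=cn$ for a small enough constant $c>0$ gives $(1-p)^{(n-1)T}\ge e^{-2Cc}$, a positive constant independent of $n$.

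On the intersection of these two events (which is a positive-probability event), the lazy walk started at $v$ satisfies $\mu_{t}^{v}=\mu_{0}^{v}$ for every $t\le T=cn$, because an isolated vertex is absorbing for the lazy chain: $P_{t}(v,v)=1$ whenever $\deg_{t}(v)=0$. At the same time $\deg_{t}(v)=0$ forces $\pi_{t}(v)=\deg_{t}(v)/(2|E_{t}|)=0$, and therefore $\nor{\mu_{t}^{v}-\pi_{t}}{TV}\ge \mu_{t}^{v}(v)-\pi_{t}(v)=1$ for every $t\in[0,T]$. With positive probability the walk from $v$ is thus at total-variation distance $1$ from $\pi_{t}$ for a whole window of length $\Omega(n)$, which is incompatible with $\tmix(\cg(n,p,q))=o(n)$.

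The main obstacle is purely a bookkeeping one: matching the constant-probability event above with the precise high-probability, window-based formulation of the dynamic mixing time provided in Definition~\ref{def:dynamicmixingtime} and Definition~\ref{def:edgemarkovianmixingtime}. This is handled by noting that $G_{0}$ in fact contains $n^{\Omega(1)}$ isolated vertices, and that, conditioned on $G_{0}$, the future isolation times of distinct vertices $v$ depend on disjoint sets of edge-slot chains (apart from the single slot between them, whose effect is negligible) and so are essentially independent; a routine second-moment or Chernoff argument then upgrades the positive-probability event to probability $1-o(1)$ that \emph{some} vertex remains isolated and traps the walk throughout $[0,cn]$, which suffices under any reasonable formalization of $\tmix=\Omega(n)$.
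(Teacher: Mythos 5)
Your proposal does not address the statement at all. The statement to be proved is the discrete Cheeger inequality: for the transition matrix $P$ of a lazy random walk on a graph $G$, one has $1 - \lambda_2(P) \le \Phi_G \le 2\sqrt{1-\lambda_2(P)}$. This is a purely spectral/combinatorial fact relating the conductance of a fixed static graph to the second eigenvalue of its lazy walk matrix; it involves no dynamic graph model, no edge-Markovian parameters $p$ and $q$, no isolated vertices, and no notion of mixing time. What you have written is instead an argument for the lower bound $\tmix(\cg(n,p,q)) = \Omega(n)$ in the sparse, slowly-changing regime (the content of Proposition~\ref{prop:slowsparse}), built around the observation that an isolated vertex of $G_0 \sim \cg(n,\tilde p)$ traps the walk for $\Omega(n)$ steps while carrying zero stationary mass. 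That argument, whatever its merits for that other proposition, establishes nothing about the relationship between $\Phi_G$ and $\lambda_2(P)$.

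A correct treatment of the Cheeger inequality would proceed along entirely different lines. The easy direction ($1-\lambda_2(P) \le \Phi_G$) follows by plugging the (shifted) indicator function of a set $S$ achieving the minimum conductance into the Rayleigh quotient characterization of $\lambda_2(P)$. The hard direction ($\Phi_G \le 2\sqrt{1-\lambda_2(P)}$) requires taking an eigenfunction for $\lambda_2$, considering the sweep sets of its level sets, and a Cauchy--Schwarz argument to extract a set of small conductance; this is the content of the Alon--Milman / Jerrum--Sinclair argument that the paper cites rather than reproves. You would need to start over with that machinery; nothing in your current write-up can be salvaged for this statement.
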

	
	Finally, we use the notation $o_n(1)$ to denote any function $f: \Nat \to \mathbb{R}$ such that $\lim_{n \to +\infty} f(n) = 0$. We often drop the subscript $n$.
	
	\subsection{Dynamic graph models} 
	In this section we formally introduce the random models of (dynamic) graphs that are the focus of this work. We start by recalling the definition of the \erdosrenyi model of random (static) graphs. 
	\begin{definition}[\erdosrenyi model]
		$G = (V,E) \sim \cg(n, p)$ is a random graph such that $|V| = \{1,\dots,n\}$ and the ${n \choose 2}$ possible edges appear independently, each with probability $p$.
	\end{definition}
	
	
	We now introduce the \emph{edge-Markovian} model of dynamic random graphs, which has been studied both in the context of information spreading in networks \cite{Clementi2016, Clementi2011} and random walks~\cite{SpirakisCover18}. This model is the focus of our work.
	
	\begin{definition}[edge-Markovian model]\label{def:edgemarkovian}
		Given a starting graph $G_0$, we denote with $(G_t)_{t \in \Nat} \sim \cg(n,p,q)$ a sequence of graphs such that $G_t = (V,E_t)$, where $V = \{1,\dots,n\}$ and, for each $t \in \mathbb{N}$, any pair of distinct vertices $u, v \in V$ will be connected by an edge in $G_t$ independently at random with the following probability:
		\[
		\Pr{\{u,v\} \in E_{t+1} \mid G_t} = \begin{cases}
		1-q & \text{ if } \{u,v\} \in E_t \\
		p & \text{ if } \{u,v\} \not\in E_t.
		\end{cases}
		\]
	\end{definition}
	Notice that different choices of a starting graph $G_0$ will induce different probability distributions over $(G_t)_{t \in \Nat}$. In general, we try to study $\cg(n,p,q)$ by making the fewest possible assumptions on our choice of $G_0$. Moreover, as pointed out for example in~\cite{SpirakisCover18}, $(G_t)_{t \in \Nat} \sim \cg(n,p,q)$ converges to $\cg(n,\tilde{p})$ with $\tilde{p} = p/(p+q)$. We leave considerations about the speed of this convergence and how this affects our choice of $G_0$ to Appendix~\ref{sec:graphchain} and, in particular, Remark \ref{rem:graphchain} .
	
	\subsection{Mixing time of random walks on dynamic graphs}
	One of the most studied quantities in the literature about time-homogeneous (i.e., static) Markov chains (random walks included) is the mixing time, i.e., the time it takes for the distribution of the chain to become close to stationarity. Formally, it is defined as follows.
	\begin{definition}[Mixing time for time-homogeneous Markov chains]\label{def:staticmixingtime}
		Let $\mu_t^x$ be the $t$-step distribution of a Markov chain with state space $V$ starting from $x \in V$. Let $\pi$ be its stationary distribution. For any $\epsilon>0$, the $\epsilon$-mixing time is defined as
		\[
		\tmix(\epsilon) := \min \{t \in \Nat: \max_{x \in V} \norm{\mu_t^x - \pi}{TV} \le \epsilon \}.
		\]
	\end{definition}
	A basic fact in random walk theory states that a lazy random walk on a connected undirected graph $G=(V,E)$ has always a finite mixing time. In particular, if $|V|=n$, $\tmix(1/4) = O(n^3)$. Moreover, considering a different $\epsilon$ does not significantly change the mixing time: for any $\epsilon > 0$, $\tmix(\epsilon) = O(\tmix(1/4) \log(1/\epsilon))$~(see, e.g., \cite{levin2017markov}). Also, it is a well-known fact that $\norm{\mu_t^x - \pi}{TV}$ is non-increasing.
	
	However, in the case of random walks on dynamic graphs, convergence to a time-invariant stationary distribution does not, in general, happen. For this reason, other works have studied alternative notions of mixing for dynamic graphs, such as merging~\cite{merging3}, which happens when a random walk ``forgets'' the vertex where it started. In this work, instead, we focus on a different approach that we believe best translates the classical notion of mixing from the static to the dynamic case. More precisely, let us consider a dynamic sequence of graphs $(G_t)_{t \in \Nat}$ with corresponding stationary distributions $(\pi_t)_{t \in \Nat}$. Our goal is to establish if there exists a time $t$ such that the distribution $\mu_t$ of the walk at time $t$ is close to $\pi_t$. Moreover, to make this notion of mixing useful in possible applications, we require that $\mu_s$ remains close to $\pi_s$ for a reasonably large number of steps $s \ge t$. Formally, we introduce the following definition of mixing time for dynamic graph sequences.
	
	\begin{definition}[Mixing time for dynamic graph sequences]\label{def:dynamicmixingtime}
		Let $\cg=(G_t)_{t \in \Nat}$ be a dynamic graph sequence on a vertex set $V$, $|V|=n$. The mixing time of a random walk in $\cg$ is defined as 
		\[
		\tmix\left(\cg \right) = \min \left\{t \in \Nat \colon \forall t \le s \le t+\sqrt{n}, \; \forall x \in V, \;\;  \norm{\mu^{x}_s - \pi_s}{TV} = o_n(1) \right\},
		\] 
		where $\pi_s$ is the stationary distribution of a random walk in $G_s$, and $\mu^{x}_s$ is the $s$-step distribution of a random walk in $\cg$ that started from $x \in V$.
	\end{definition}
	
	First observe we require that the total variation distance between $\mu_s$ and $\pi_s$ goes to zero as the number of vertices increases.\footnote{We are implicitly assuming there is an infinite family of dynamic graph sequences with increasing $n$.} This is motivated by the fact that the distance to stationarity, unlike in the static case, might never drop beyond a certain threshold: for this reason, we explicitly require that such threshold becomes smaller and smaller with an increasing number of vertices. Secondly, we require that such distance remains small for $\sqrt{n}$ steps (recall $n$ is the number of vertices in the graph). This is due to the fact that, for all dynamic graph models we consider, we cannot hope for such distance to stay small arbitrarily long. However, we believe that $\sqrt{n}$ steps is a long enough period of time for mixing properties to be useful in applications.
	
	Since our goal is to study the mixing property of $\cg(n,p,q)$, we now introduce a definition of mixing time for edge-Markovian models that takes into account the probabilistic nature of such graph sequences. Essentially, we say that the mixing time of $\cg(n,p,q)$ is $t$ if a random walk on a dynamic sequence of graphs sampled from $\cg(n,p,q)$ mixes (according to the previous definition) in $t$ steps with high probability over the sampled dynamic graph sequence.
	\begin{definition}[Mixing time for edge-Markovian models]\label{def:edgemarkovianmixingtime}
		Given an edge-Markovian model $\cg(n,p,q)$, its mixing time is defined as 
		\[
		\tmix\left(\cg(n,p,q) \right) = \min \left\{t \in \Nat \colon \ \Psub{\cg \sim \cg(n,p,q)}{\tmix\left(\sg \right) \le t} \ge 1 - o_n(1) \right\}.
		\]
	\end{definition}
	
	Finally, we remark that, while in static graphs connectivity is a necessary prerequisite to mixing, random walks on sequences of disconnected dynamic graphs might nonetheless exhibit mixing properties. Examples of this behavior were studied in~\cite{SauerwaldZanetti}.

	\section{Results for the fast-changing case}\label{sec:dependentfast}

	\subsection{Negative result for mixing in the sparse and fast-changing case} \label{sec:fastsparse}
	In this section we consider random walks on sparse and fast-changing edge-Markovian graphs. In particular, we study $\cg(n,p,q)$ with $0 < q = \Omega(1)$ and $p = \Theta(\frac{1}{n})$. Since $\Omega(1)$, by Remark~\ref{rem:graphchain}, we can restrict ourselves to consider the case where $G_0 \sim \cg(n,\tilde{p})$ with $\tilde{p} = p/(p+q)$. We will show a negative result on the mixing of random walks in this regime: no matter how small is the distance between $\mu_t$ and $\pi_t$, the total variation distance will increase to a positive constant with constant nonzero probability.
	
	\fastsparse*
	The key idea behind this result is that due to the fast-changing nature, the degrees of nodes change rapidly. In particular, for a linear number of nodes $u$, there is at least one neighbor $v_{\min} \in \Gamma_t(u)$ whose degree may change from one constant in round $t$ to, basically any other constant (this also makes use of the assumption on $p$, ensuring that the graph is sparse). The proof then exploits that, due to the ``unpredictable'' nature of this change, the probability mass received by $v_{\min}$ in round $t+1$ is likely to cause a significant difference between $\mu_{t+1}(u)$ and $\pi_{t+1}(u)$. Since this holds for a linear number of nodes $u$, we obtain a sufficiently large lower bound on the total variation distance, and the theorem is established. 
	
	\begin{proof}[Proof of \thmref{fastsparse}]
		We will prove that no matter how small the distance between $\mu_t$ and $\pi_t$ is, the total variation distance between $\mu_{t+1}$ and $\pi_{t+1}$ will increase to a positive constant with constant nonzero probability. This will yield the theorem.
		
		In this regime, at a time $t$ the graph has converged to $\cg(n, \tilde{p})$, and when the graph has isolated nodes, the stationary distribution of the random walk on a disconnected graph is not unique. Recall that in this case, we choose $\pi_t(u) = \deg(u)/2|E_t|$ as the stationary distribution. 
	
		For any node $u$, we have
		\begin{align*}
		\abs{ \mu_{t+1}(u) - \pi_{t+1}(u)} &= \abs{ \left( \sum_{v \in \Gamma_{t+1}(u)}\pi_t(v) \cdot \frac{1}{\deg_{t+1}(v)} \right) - \frac{\deg_{t+1}(u)}{2|E_{t+1}|} } \\
		& = \abs{ \sum_{v \in \Gamma_{t+1}(u)} \left( \frac{\deg_t(v)}{2|E_t|}\frac{1}{\deg_{t+1}(v)} - \frac{1}{2|E_{t+1}|} \right) } 
		\end{align*}
		where by Chernoff bound, we know $|E_{t+1}|/|E_t|$ is close to 1 and both of them are $\Theta(n)$ in this regime. Hence we can use $|E| = \binom{n}{2} \tilde{p}$ to replace either of them to get an approximation. So the distance is contributed by $\deg_t(v)/\deg_{t+1}(v)$. W.l.o.g. in our setting we assume $p = \frac{1}{n}$ and $q = \frac12$.
		
		For a vertex $u \in V$, let $v_{\min} \in \Gamma_{t}(u)$ be the neighbor of $u$ with the smallest degree in $G_t$, and assume $u$ is a vertex so that $\deg_{t}(v_{\min}) \leq 20$.
		Let us define $\alpha:= \sum_{v \in \Gamma_{t+1}(u) \setminus \{v_{\min}\}}  \left( \frac{\deg_t(v)}{\deg_{t+1}(v)} - 1 \right)$, the contribution by all neighbors of $u$ instead of $v_{\min}$. Also let us define $\beta:= \sum_{ v \in \Gamma_{t+1}(u)} \mathbf{1}_{ \{ \{v,v_{\min} \} \in E_{t+1} \} }$, the number of neighbors of $v_{\min}$ in $\Gamma_{t+1}(u)$ and $\gamma := \sum_{ v \in V \setminus \Gamma_{t+1}(u)} \mathbf{1}_{ \{ \{v,v_{\min} \} \in E_{t+1} \} }$, the number of neighbors of $v_{\min}$ in $V \setminus \Gamma_{t+1}(u)$. Let us define the following events: 
		\begin{align*}
		\ca &:= \{ |E_{t+1}| = (1 \pm \sqrt{10 \log n/ n} ) |E| \} \cap \{ |E_{t}| = (1 \pm \sqrt{10 \log n / n}) |E| \}, \\ \cb &:= \left\{ \beta \leq 80 \right\}, \\
		\cc &:=
		\left\{ \abs{ \alpha + \left( \frac{\deg_{t}(v_{\min})}{\beta+ \gamma  } - 1 \right) } \geq c_4
		\right\}, \\
		\cd &:= \left\{ v_{\min} \in \Gamma_{t+1}(u) \right\},
		\end{align*}
		where $|E| = \binom{n}{2}\tilde{p}$ is the expected number of edges of a stationary graph, and $c_4 > 0$ is a proper constant that is defined later.
		
		Using Chernoff bound, we have that
		\[
			\Pr{\neg \ca} \le \exp\left( \frac{10 \log n |E|}{2n} \right) = O(n ^{-c}),
		\]
		for some constant $c$.
		
		Note that $\Pr{ \ca} \geq 1-o(1)$, $\Pr{ \cb } \geq 3/4$ and $\Pr{ \cd } = 1-q > 0$ which is also a constant probability in this regime. If the event $\mathcal{A}$ holds, then 
		\begin{align*}
		\lefteqn{ \abs{ \mu_{t+1}(u) - \pi_{t+1}(u)} } \\
		&= \sum_{v \in \Gamma_{t+1}(u)} \frac{1}{2|E_t|} \left( \frac{\deg_t(v)}{\deg_{t+1}(v)} - 1 \right) \pm O(\log^2 n/\sqrt{n}).
		\end{align*}
		Further, assuming that $\mathcal{D}$ holds, we can lower bound
		\[
		\sum_{v \in \Gamma_{t+1}(u) \setminus \{v_{\min}\}} \frac{1}{2|E_t|} \left( \frac{\deg_t(v)}{\deg_{t+1}(v)} - 1 \right)
		+ \frac{1}{2|E_t|} \left( \frac{\deg_t(v_{\min})}{\deg_{t+1}(v_{\min})} - 1 \right).
		\]
		Now let us expose all degrees for $v \in \Gamma_{t+1}(u) \setminus \{v_{\min}\}$, subject to event $\mathcal{B}$ holding. Hence the above formula is equal to
		\[
		\frac{1}{2|E_t|} \cdot \left( \alpha + \frac{\deg_{t}(v_{\min})}{\beta+ \gamma  } - 1  \right).
		\]
		Recall that $\alpha$ and $\beta \leq 80$ are arbitrary. Further, $\gamma$ is independent of $\alpha$ and $\beta$. Also for any constant $c_1 > 0$, $\Pr{ \gamma = c_1} > c_2$ for some other constant $c_2 > 0 $. Note in order for $|\mu_{t+1}(u)-\pi_{t+1}(u)|$ not to contribute significantly to the total variation distance,  we must have
		\[
		\alpha + \frac{\deg_{t}(v_{\min})}{\beta+ \gamma  } - 1 = 0,
		\]
		which is equivalent to 
		\[
		\gamma = \frac{\deg_{t}(v_{\min})}{1-\alpha} - \beta.
		\]
		There is at most one possible (positive integer) value for $\gamma$ that solves this equation after $\alpha, \beta$ have been revealed. In particular, there is at least one constant $c_3 > 0$ so that 
		\[
		\left| \alpha + \frac{\deg_{t}(v_{\min})}{\beta+ c_3  } - 1  \right| > c_4,
		\]
		for some other constant $c_4 > 0$ because $v_{\min} \le 20$ and $\beta \le 80$. This proves $\Pr{ \cc \mid  \cb} \geq c_5$. Further,
		\begin{align*}
		\Pr{ \mathcal{A}  \cap \mathcal{C}  } &\geq \Pr{ \mathcal{A} } - \Pr{ \neg \mathcal{C}} \\
		&\geq \Pr{ \mathcal{A}} - 1 + \Pr{  \mathcal{C}  } \\
		&\geq \Pr{ \mathcal{A}}  - 1 + \Pr{  \mathcal{C} \wedge \mathcal{B} } \\
		&\geq \Pr{  \mathcal{A}} - 1 + \Pr{  \mathcal{B}} \cdot \Pr{ \mathcal{C} \, \mid \, \mathcal{B} } \\
		&\geq -o(1) + \frac{3}{4} \cdot c_5 > 0.
		\end{align*}
		Hence with constant probability, any vertex $u \in V$ in $G_{t}$ that has a neighbor $v_{\min}$ with $\deg_t(v_{\min}) \leq 20 $ will contribute $\Omega(1/|E|)$ to the $\ell_1$-norm with constant probability $c_6 > 0$. Let us now lower bound the number of such vertices, $S_t :=\left\{ u \in V(G_t) \colon \min_{v \in \Gamma_t(u)} \{ \deg(v)  \} \leq 20 \right\}$. 
		
		Fix any vertex $u \in V$. With constant probability $C_1 > 0$, it has at least one neighbor, say, $w$. Further, that neighbor $w$ will have at most $19$ neighbors (besides $u$) with constant probability $C_2 > 0$. Hence $\Ex{|S_t|} \geq (C_1 C_2) \cdot n.$ Using Markov's inequality, it follows that
		\[
		\Pr{ n - |S_t| \geq (1+C_1 C_2/2) \cdot (1-C_1 C_2) \cdot n} \leq \frac{1}{1+C_1 C_2/2}.
		\]
		Rearranging the above, it follows that with constant probability $C_3:=\frac{1}{1+C_1 C_2/2} > 0$, we have $|S_t| \geq (C_1 C_2/2) \cdot n$.
		Hence the overall contribution of all vertices in $S_t$ is at least $\Omega(n) \cdot \Omega(1/|E|) = \Omega(1)$ (as $|E|=\Theta(n)$ in this regime), with some constant probability $c_7 > 0$. 
	\end{proof}

	\subsection{Positive result for mixing in the dense and fast-changing case} \label{sec:fastdense}
	In this section we analyse the mixing properties of $\cg(n,p,q)$ for $p = \Omega(\log{n}/n)$ and $q = \Omega(1)$. Since $q$ is large, for simplicity we will assume throughout this section that $G_0 \sim \cg(n,\tilde{p})$, where $\tilde{p} = \frac{p}{p+q}$ (see Remark~\ref{rem:graphchain} for an explanation of why this is not a restriction).
	The main result of this section is the following theorem.
	
	\fastdense*
	
	
	While in this paper we study for simplicity only lazy random walks on graphs, to prove \thmref{fastdense}, however, we need to introduce \emph{simple} random walks on graphs: given a graph $G=(V,E)$, a simple random walk on $G$ has transition matrix $Q$ such that, for any $x,y \in V$, $Q(x,y) = 1/\deg(x)$ if $\{x,y\} \in E$,  $Q(x,y) = 0$ otherwise. 
	The following lemma, whose proof is the main technical part of the section, shows that if the \emph{simple} random walk on a sequence of graphs $\cg = (G_t)_{t \in \mathbb{N}}$ exhibits strong expansion properties, and the time-varying stationary distribution is always close to uniform, then a \emph{lazy} random walk on $\cg$ will be close to the stationary distribution of $G_t$ for any $t$ large enough. We remark that a strong expansion condition on lazy random walks can never be satisfied; luckily, we just need this strong expansion condition to hold for their simple counterpart. 
	
	\begin{lemma} \label{lem:fastdense}
		Let $(G_t)_{t \in \mathbb{N}}$ be a sequence of graphs, and $(P_t)_{t \in \mathbb{N}}$ (resp. $(Q_t)_{t \in \mathbb{N}}$) the corresponding sequence of transition matrices for a \emph{lazy} (resp. \emph{non-lazy}) random walk. Assume there exists  $1<C=O(1)$ such that, for any $t \geq 1$ and any $x \in V$, $1/(C \cdot n) \le \pi_t(x) \le C/n$. Moreover, also assume that, for any $t \in \Nat$, $\max\{|\lambda_2(Q_t)|,|\lambda_n(Q_t)|\} \le \lambda = o(1)$. Then, there exists an absolute constant $C'$ such that, w.h.p., for any $t \ge C'\log{n}$ and any starting distribution $\mu_0$,
		\[
		\left\| \frac{\mu_{t}}{\pi_{t}} - \mathbf{1} \right\|_{2,\pi_{t}}^2 \le 10C^2(C-1)^2,
		\]
		where $\mu_t = \mu_0 P_1 \cdots P_t$.
	\end{lemma}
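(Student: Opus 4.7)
The plan is to establish, for $\psi_t := \left\| \frac{\mu_t}{\pi_t} - \mathbf{1} \right\|_{2,\pi_t}^2$, a one-step recursive inequality of the form $\psi_{t+1} \le \rho\,\psi_t + \beta$ with $\rho < 1$ and $\beta = O(C^2(C-1)^2)$, and then iterate it for $\Theta(\log n)$ steps.

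First, I would apply Lemma~\ref{lem:spectral} to the lazy transition matrix $P_{t+1}$, which is reversible with respect to $\pi_{t+1}$: taking $f=\mu_t$, this yields
\[
\psi_{t+1} \;\le\; \lambda_2(P_{t+1})^2 \cdot \left\| \frac{\mu_t}{\pi_{t+1}} - \mathbf{1} \right\|_{2,\pi_{t+1}}^2.
\]
Because $P_{t+1} = \frac{1}{2}(I + Q_{t+1})$, we have $\lambda_2(P_{t+1}) = \frac{1}{2}(1+\lambda_2(Q_{t+1})) \le \frac{1}{2}+o(1)$ by the hypothesis on the spectrum of $Q_{t+1}$, and so $\lambda_2(P_{t+1})^2 \le \frac{1}{4}+o(1)$.

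Second, I would relate $\|\mu_t/\pi_{t+1} - \mathbf{1}\|_{2,\pi_{t+1}}^2$ to $\psi_t$ by swapping reference distributions. Writing $\mu_t(x)-\pi_{t+1}(x) = (\mu_t(x)-\pi_t(x)) + (\pi_t(x)-\pi_{t+1}(x))$ and applying the elementary inequality $(a+b)^2 \le (1+\varepsilon)a^2 + (1+1/\varepsilon)b^2$, together with $\pi_t(x)/\pi_{t+1}(x) \le C^2$, I would obtain
\[
\left\| \frac{\mu_t}{\pi_{t+1}} - \mathbf{1} \right\|_{2,\pi_{t+1}}^2 \;\le\; (1+\varepsilon)\,C^2\,\psi_t \;+\; (1+1/\varepsilon)\sum_x \frac{(\pi_t(x)-\pi_{t+1}(x))^2}{\pi_{t+1}(x)}.
\]
The last sum is bounded using $|\pi_t(x)-\pi_{t+1}(x)| \le C/n - 1/(Cn) = (C^2-1)/(Cn)$ and $1/\pi_{t+1}(x) \le Cn$, giving $(C^2-1)^2/C = O(C^2(C-1)^2)$.

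Third, combining the two inequalities yields $\psi_{t+1} \le (1/4+o(1))\bigl[(1+\varepsilon)C^2\,\psi_t + (1+1/\varepsilon)\cdot O(C^2(C-1)^2)\bigr]$. I would pick $\varepsilon$ small enough that $\rho := (1/4)(1+\varepsilon)C^2 < 1$, which is achievable since in the intended setting (the application to Theorem~\ref{thm:fastdense}) Chernoff concentration of degrees forces $C = 1+o(1)$. Iterating the recursion from the worst-case initial bound $\psi_0 \le \max_x 1/\pi_0(x) - 1 \le Cn$, after $t \ge C'\log n$ steps (for a sufficiently large absolute constant $C'$) the transient term $\rho^t \psi_0$ drops to $o(1)$ and the steady-state term $\beta/(1-\rho) = O(C^2(C-1)^2)$ remains, together yielding the claimed bound $10 C^2(C-1)^2$.

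The main obstacle is the tension between the lazy walk's spectral radius (which is $\ge 1/2$) and the amplification factor $C^2$ incurred when switching reference distribution from $\pi_t$ to $\pi_{t+1}$: the product $(1/4)\cdot C^2$ is strictly below $1$ only for $C$ sufficiently close to $1$, so the elementary $(a+b)^2$ split with $\varepsilon$ chosen carefully is essential to avoid losing a constant larger than the spectral gap permits. A secondary technical issue is that the spectral assumption on $Q_t$ may only hold with high probability over the random graph sequence, which is handled by a union bound over the $O(\log n)$ relevant steps, accounting for the ``w.h.p.'' qualifier in the conclusion.
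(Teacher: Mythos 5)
There is a genuine gap, and it is exactly at the point you flag as "the main obstacle" and then wave away. Your recursion has contraction factor $\rho = \bigl(\tfrac14+o(1)\bigr)(1+\varepsilon)C^2$, coming from the lazy walk's spectral bound $\lambda_2(P_{t+1})^2 \le \tfrac14+o(1)$ multiplied by the $C^2$ loss incurred when swapping the reference distribution from $\pi_{t+1}$ to $\pi_t$. This is below $1$ only when $C<2$. But the lemma is stated for an arbitrary constant $C=O(1)$, and it is applied in the paper precisely in a regime where $C$ is \emph{not} close to $1$: Proposition~\ref{prop:coarse} (the semi-sparse, coarse-mixing result) invokes this lemma with $C=\Theta(1)$, where degrees concentrate only up to constant factors. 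Your appeal to "$C=1+o(1)$ in the intended setting" covers the application in \thmref{fastdense} but not the lemma as stated, so the proof does not establish the claim. The obstruction is structural: the lazy matrix always has $\lambda_2(P)\ge \tfrac12-o(1)$, so no choice of $\varepsilon$ rescues a per-step $\ell_2^2$ contraction better than $\tfrac14$, and a multiplicative reference-swap loss of order $C^2$ is unavoidable when the stationary distributions are only pinned to within a factor $C$ of uniform. A symptom of the problem is that your argument never genuinely uses the hypothesis $\lambda=o(1)$ --- $\lambda \le 1-\Omega(1)$ would serve you equally well --- whereas the lemma's conclusion really does require the full strength of that assumption.

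The paper's proof circumvents this by never iterating a recursion between the two time-varying references. Instead it converts the $\ell_2(\pi_t)$ distance to the $\ell_2$ distance from the \emph{time-invariant} uniform distribution $\mathbf{1/n}$ (inequalities \eqref{eq:ell2norm} and \eqref{eq:ell2norm2}), at the cost of an additive $O(C(C-1)^2)$ error and multiplicative constants $2Cn$ versus $n/C$. It then exploits the decomposition of the lazy step into a lazy half (which leaves $\|\mu_t-\mathbf{1/n}\|_2$ unchanged) and a non-lazy half governed by $Q_{t+1}$, whose spectral bound $\lambda=o(1)$ gives a per-step contraction of $2\lambda C^2 = o(1)$ on $n\|\mu_t-\mathbf{1/n}\|_2^2$. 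Because the contraction is $o(1)$ rather than a fixed constant, it absorbs the $C^2$ loss for \emph{any} $C=O(1)$, and $O(\log n)$ steps suffice to reach and then maintain the bound $10C^2(C-1)^2$. If you want to salvage your argument, you must replace the application of Lemma~\ref{lem:spectral} to $P_{t+1}$ with this split through $Q_{t+1}$ and a fixed reference distribution.
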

	
	\begin{proof}[Proof of \lemref{fastdense}]
		We first relate the $\ell_2(\pi)$ distance to stationarity to the $\ell_2$ distance from the uniform distribution. We start by observing that, for any $t \in \Nat$, by our assumptions on $\pi_t$, it holds that
		\begin{align*}
		\|\pi_t - \mathbf{1/n} \|_2^2 &= \sum_{x \in V} \left( \pi_t(x) - \frac{1}{n} \right)^2 \\
		&\le \max\left\{\left(1 - \frac{1}{C}\right)^2, (C-1)^2 \right\} \cdot \frac{n}{n^2} \\
		&= \frac{(C-1)^2}{n}.
		\end{align*}
		
		Then, for any probability distribution $p$, we have that
		\begin{align}
		\left\| \frac{p}{\pi_t} - \mathbf{1} \right\|_{2,\pi_t}^2 &= 
		\sum_x \frac{(p(x)-\pi_t(x))^2}{\pi_t(x)} \nonumber \\
		&\le Cn \cdot \| p - \pi_t \|_2^2 \nonumber \\
		&\le 2Cn \cdot \left(\| p - \mathbf{1/n} \|_2^2 + \|\pi_t - \mathbf{1/n} \|_2^2  \right) \label{eq:trianglesquare} \\
		&\le 2Cn \cdot \left(\| p - \mathbf{1/n} \|_2^2 + (C-1)^2/n  \right) \nonumber \\
		&= 2Cn \cdot \| p - \mathbf{1/n} \|_2^2 + 2C(C-1)^2, \label{eq:ell2norm}
		\end{align}
		where \eqref{eq:trianglesquare} holds by the triangle inequality and the basic inequality $(a+b)^2 \le 2a^2 + 2b^2$.
		Analogously, it holds that
		\begin{align}
		\left\| \frac{p}{\pi_t} - \mathbf{1} \right\|_{2,\pi_t}^2 
		&\ge (n/C) \cdot \left(\frac{1}{2} \| p - \mathbf{1/n} \|_2^2 - 3 \|\pi_t - \mathbf{1/n} \|_2^2 \right) \label{eq:trianglesquare2} \\
		&\ge (n/C) \cdot \left(\frac{1}{2} \| p - \mathbf{1/n} \|_2^2 - \frac{3}{n}(C-1)^2 \right) \nonumber \\ 
		&\ge (n/C) \cdot \| p - \mathbf{1/n} \|_2^2 - 3(C-1), \label{eq:ell2norm2}
		\end{align}
		where \eqref{eq:trianglesquare2} holds by the triangle inequality and the basic inequality $(a-b)^2 \ge a^2/2 - 3b^2$.
		
		Notice that the distance to the uniform distribution does not change if at step $t$ we perform a lazy step, which happens with probability $1/2$. Conditioning on the fact that we don't take a lazy step, at time $t$ we can bound the decrease in the distance to the uniform distribution as follows:
		\begin{align}
		\left\| \frac{\mu_{t+1}}{\pi_{t+1}} - \mathbf{1} \right\|_{2,\pi_{t+1}}^2 
		&\le \lambda \cdot \left\| \frac{\mu_{t}}{\pi_{t+1}} - \mathbf{1} \right\|_{2,\pi_{t+1}}^2 \nonumber \\
		&\le \lambda \cdot \left(2Cn \cdot \| \mu_{t} - \mathbf{1/n} \|_2^2 + 2C(C-1)^2\right), \label{eq:bigdecrease}
		\end{align}
		where the first inequality follows from \lemref{spectral} and the second by \eqref{eq:ell2norm}. Moreover, we have that
		\begin{align}
		n \cdot \| \mu_{t+1} - \mathbf{1/n} \|_2^2 
		&\le C \left\|\frac{\mu_{t+1}}{\pi_{t+1}} - \mathbf{1} \right\|_{2,\pi_{t+1}}^2 + 3C(C-1) \nonumber \\ 
		&\le o(\| \mu_{t} - \mathbf{1/n} \|_2^2 + C^2(C-1)^2)+ 3C(C-1), 
		\label{eq:decrease}
		\end{align}
		where the first inequality follows from \eqref{eq:ell2norm2}, and the last from \eqref{eq:bigdecrease} and the assumption $\lambda = o(1)$.
		
		This implies that whenever $n \cdot \| \mu_{t} - \mathbf{1/n} \|_2^2$ is large enough (e.g., $n \cdot \| \mu_{t} - \mathbf{1/n} \|_2^2 \ge 4C(C-1)$), if we condition on the walk not taking a lazy step at time $t$, the distance to the uniform distribution will shrink significantly (this follows because $\lambda = o(1)$). Therefore, we just need $O(\log{n})$ non-lazy steps for such distance to become small. Hence, after $t=O(\log{n})$ steps, it holds w.h.p. that
		\begin{equation} \label{eq:smalldist}
		n \cdot \| \mu_{t} - \mathbf{1/n} \|_2^2 \le 4C(C-1),
		\end{equation}
		which also implies by \eqref{eq:ell2norm} that
		\[
		\left\| \frac{\mu_{t}}{\pi_{t}} - \mathbf{1} \right\|_{2,\pi_{t}}^2 \le 10C^2(C-1)^2.
		\]
		
		Moreover, after $O(\log(n))$ steps, this distance will continue to be small. In fact, let $\mu_{t}$ satisfy \eqref{eq:smalldist}. If we condition on taking a lazy step at time $t+1$, such distance will not change. If instead we take a non-lazy step, by \eqref{eq:decrease}, 
		\[
		n \cdot \| \mu_{t} - \mathbf{1/n} \|_2^2 \le 3C(C-1) + o(C^2(C-1)^2) \le 4C(C-1),
		\]
		and, therefore, the distance to the uniform distribution again satisfies \eqref{eq:smalldist}. The lemma follows by applying \eqref{eq:ell2norm} once again.
	\end{proof}
	
	We now show how it can be used to derive \thmref{fastdense}. First recall that since we are assuming $G_0 \sim \cg(n,\tilde{p})$, all graphs in the sequence $(G_t)_{t \in \mathbb{N}}$ are sampled (non-independently) from $\cg(n,\tilde{p})$ (see Appendix \ref{sec:graphchain}). Furthermore, for any $t\in \Nat$, the assumptions of \thmref{fastdense} on $\lambda_2(Q_t)$ and $\lambda_n(Q_t)$ are satisfied with probability $1-o(1/n^2)$ for any graph sampled from $\mathcal{G}(n,\tilde{p})$ with $\tilde{p} > 2\log{n}/n$ by \cite[Theorem~1.1]{hoffman12}. Moreover, for  $\tilde{p} = \omega\left(\log{n}/n\right)$, by standard Chernoff bounds argument we can show that, with probability $1-o(1/n^2)$, all vertices of a graph sampled from $\cg(n,\tilde{p})$ have degree $(1+o_n(1))n\tilde{p}$. This implies that, for any $t$, w.h.p, the stationary distribution of $G_t$ satisfies the assumptions of \lemref{fastdense} with $C= 1 + o(1)$, which yields \thmref{fastdense}.
	
	It is natural to ask if we can relax the condition on $p$. Assume for example that $p,q$ are such that $\tilde{p} = p /(p+q) > 2\log{n}$. By \cite[Theorem~1.1]{hoffman12}, the conditions on $\lambda$ are still satisfied. However, it only holds that $C=\Theta(1)$. Therefore, \lemref{fastdense} can only establish that the $\ell_2(\pi_t)$-distance to stationarity is a constant (potentially larger than $1$). This, unfortunately, does not give us any meaningful bound on the total variation distance. However, if the $\ell_2$-distance between two distributions $\mu$ and $\pi$ is small, $\mu(x)$ cannot be much larger than $\pi(x)$. In a sense, this result can be interpreted as a \emph{coarse} mixing property. This is summarised in the following proposition.
	
	\begin{proposition}\label{prop:coarse}
		Let $(G_t)_{t \in \mathbb{N}} \sim \cg(n,p,q)$ with $p/(p+q) > 2 \log{n}/n$ and $q = \Omega(1)$. Let $\pi_t$ be the stationary distribution of $G_t$. Then, there exists absolute constants $c_1,c_2>0$ such that, for any starting distribution $\mu_0$ and any $c_1 \log{n} \le t \le \sqrt{n} + c_1 \log{n}$, it holds that
		\[
		\Pr{\left\| \frac{\mu_{t}}{\pi_{t}} - \mathbf{1} \right\|_{2,\pi_{t}}^2 \le c_2} \ge 1 - o_n(1). 
		\]
	\end{proposition}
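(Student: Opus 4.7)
The plan is to apply \lemref{fastdense} essentially as a black box, verifying that its two hypotheses hold for every $G_t$ in the time window with high enough probability to union-bound. The one subtlety compared with the discussion immediately following \lemref{fastdense} is that in the semi-sparse regime $\tilde{p} = p/(p+q) = \Theta(\log n / n)$, standard degree-concentration only yields $C = \Theta(1)$ rather than $C = 1 + o(1)$, so \lemref{fastdense} delivers only an $O(1)$ bound on $\|\mu_t/\pi_t - \mathbf{1}\|_{2,\pi_t}^2$ instead of a vanishing one, which is precisely the coarse-mixing claim.

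The first step is to invoke Remark~\ref{rem:graphchain} (using $q = \Omega(1)$) to reduce to the case $G_0 \sim \cg(n, \tilde{p})$, so that the marginal distribution of each $G_t$ is $\cg(n, \tilde{p})$ with $\tilde{p} > 2 \log n / n$. Next, for each fixed $t$ in the window $[0, \sqrt{n} + c_1 \log n]$, I would apply \cite[Theorem~1.1]{hoffman12} to obtain $\max\{|\lambda_2(Q_t)|, |\lambda_n(Q_t)|\} = o(1)$ with probability $1 - o(1/n^2)$, and a standard Chernoff bound (which holds for $\tilde{p} \ge 2 \log n / n$ with a sufficiently large constant) to conclude that every vertex of $G_t$ has degree in the range $[n\tilde{p}/2, 2n\tilde{p}]$ with probability $1 - o(1/n^2)$. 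Together these give $1/(Cn) \le \pi_t(x) \le C/n$ for an absolute constant $C$.

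Union-bounding over the $O(\sqrt{n})$ many $t$ in the window yields that both conditions hold for every $G_t$ simultaneously with probability $1 - o(1)$. On this good event, \lemref{fastdense} applies to the sequence $(G_t)$, and choosing $c_1$ to be the constant $C'$ from that lemma gives $\|\mu_t/\pi_t - \mathbf{1}\|_{2,\pi_t}^2 \le 10 C^2(C-1)^2$ for every $t \ge c_1 \log n$ in the window; setting $c_2 := 10 C^2 (C-1)^2$ then closes the argument.

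The main---and essentially only---obstacle is bookkeeping: the per-step failure probabilities must beat the $\sqrt{n}$-length window in a union bound, and both the Chernoff bound and the spectral bound from \cite{hoffman12} comfortably deliver $o(1/n^2)$, which suffices. Everything beyond this reduces to a direct application of the already-proved \lemref{fastdense}, which is why the proposition follows with so little extra work once the stronger mixing conclusion of \thmref{fastdense} has been relaxed to an $O(1)$ bound on the $\ell_2(\pi_t)$-norm.
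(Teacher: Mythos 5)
Your proposal matches the paper's own justification of Proposition~\ref{prop:coarse}, which is given only as the discussion preceding its statement: invoke \lemref{fastdense}, with the spectral hypothesis supplied by \cite[Theorem~1.1]{hoffman12} and the degree concentration now yielding only $C=\Theta(1)$ rather than $C=1+o(1)$, so that the conclusion degrades to a constant bound on the $\ell_2(\pi_t)$-distance. One quantitative slip worth noting: at $\tilde{p}=2\log n/n$ the lower-tail Chernoff bound does \emph{not} put every degree above $n\tilde{p}/2$ with probability $1-o(1/n^2)$ (the per-vertex failure probability is roughly $n^{-0.3}$, so even the union bound over the $n$ vertices of a single graph fails); you must instead settle for all degrees being at least $\epsilon n\tilde{p}$ for a sufficiently small constant $\epsilon>0$, which still gives $C=\Theta(1)$ but only with per-graph probability $1-n^{-1+o(1)}$ --- still sufficient to union-bound over the $O(\sqrt{n})$ graphs in the time window.
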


	\section{Results for the slowly-changing case}\label{sec:dependentslow}
	

	\subsection{Positive result for mixing in the dense and slowly-changing case}\label{sec:slowdense}
	The aim of this section is to prove the following theorem.
	\slowdense*
	
	We start by proving that, if the three assumptions of \thmref{slowdense} are satisfied, then, for any $t=O(nd\log n)$, the conductance of $G_t$ is not much worse than the conductance of $G_0$ (with high probability). 
	\begin{lemma}[Conductance lower bound]\label{lem:conductancelowerbound}
		Let $d=\Omega(\log n)$, $p=O(\log n/n^2)$,  and $q=O(\log n/(dn))$. Assume that $G_0$ satisfies assumptions (1),(2),(3) of \thmref{slowdense}.
		Then,  there exists a constant $c>0$ such that, for any $t = O(n d\log n)$ and any vertex $v \in V$,
		\[
		\Pr{\deg_t(v) \le \frac{1}{2} \deg_0(v) } = O(n^{-4})
		\]
		and 
		\[
		\Pr{ \Phi_{G_t} \geq c \cdot \Phi_{G_0} } = 1- O(n^{-4}).
		\]
	\end{lemma}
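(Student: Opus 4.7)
Proof sketch. I split the statement into the degree bound and the conductance bound.

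For the degree bound, fix $v\in V$ and $t=O(nd\log n)$. The indicators $(\mathbf{1}[\{u,v\}\in E_t])_{u\ne v}$ are mutually independent (distinct edge slots evolve as independent two-state Markov chains), each being Bernoulli with success probability $r_t^{\mathrm{on}}:=\tilde{p}+(1-\tilde{p})\rho$ or $r_t^{\mathrm{off}}:=\tilde{p}(1-\rho)$ (where $\rho:=(1-p-q)^t$) according to whether $\{u,v\}\in E_0$. Hence $\Ex{\deg_t(v)}=\rho\deg_0(v)+(1-\rho)(n-1)\tilde{p}$, a convex combination of $\deg_0(v)$ and $d$, both $\Theta(d)$ by assumption~(1). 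With the hidden constants in~(1) close enough to $1$, we have $\Ex{\deg_t(v)}\ge(3/4)\deg_0(v)$, and a multiplicative Chernoff bound on the $n-1$ independent Bernoullis yields $\Pr{\deg_t(v)\le\deg_0(v)/2}\le\exp(-\Omega(d))=O(n^{-4})$, since $d=\Omega(\log n)$ with sufficiently large constant.

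For the conductance bound, the first part together with a union bound gives $\vol_t(S)\le Cd|S|$ for every $S\subseteq V$ with probability $1-O(n^{-3})$, so it suffices to lower-bound $|E_t(S,V\setminus S)|$ uniformly over nonempty $S$ with $\vol_t(S)\le\vol_t(V)/2$. For any fixed $S$ the cut is a sum of $|S|(n-|S|)$ independent Bernoullis with expectation
\[
\mu_S=\rho|E_0(S,V\setminus S)|+(1-\rho)\tilde{p}|S|(n-|S|),
\]
a convex combination of the initial cut and its stationary value. A Chernoff bound controls the lower tail, and I union-bound over $S$ by splitting on $|S|=k$. For small sets ($k\le c_1\log n$), assumption~(2) gives $|E_0(S,V\setminus S)|\ge c_2\log n\cdot k$, so $\mu_S\ge\Omega(\log n\cdot k)$ irrespective of $\rho$; Chernoff yields per-set failure probability $\exp(-\Omega(\log n\cdot k))$, and with $c_2$ large enough $\sum_{k\le c_1\log n}\binom{n}{k}\exp(-\Omega(\log n\cdot k))=O(n^{-4})$. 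For large sets ($c_1\log n<k\le n/2$), assumption~(3) gives $|E_0(S,V\setminus S)|\ge\Omega(\Phi_{G_0}dk)$ and $\tilde{p}|S|(n-|S|)\ge\Omega(dk)$, so $\mu_S\ge\Omega(\Phi_{G_0}dk)$; Chernoff yields failure $\exp(-\Omega(\Phi_{G_0}dk))\le\exp(-\Omega(c_3\log d\cdot k))$ via assumption~(3), and choosing $c_1$ so that $\log(en/k)\le c_3\log d/2$ whenever $k>c_1\log n$, the union bound is $O(n^{-4})$. Dividing $\mu_S/2$ by the volume bound yields $\Phi_{G_t}(S)\ge\Omega(\max\{\Phi_{G_0},\log n/d\})\ge c\Phi_{G_0}$ via assumption~(3), and taking the minimum gives $\Phi_{G_t}\ge c\Phi_{G_0}$.

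The main obstacle is calibrating the three constants $c_1,c_2,c_3$ so that the Chernoff concentration for $\mu_S$ beats the combinatorial $\binom{n}{k}$ factor simultaneously at small and large $k$. Assumption~(2) supplies the extra $\log n$ needed when $k=O(\log n)$, where $\binom{n}{k}$ is nearly $n^k$; assumption~(3) supplies the $\log d\cdot k$ factor needed when $k\gg\log n$. These two arguments interlock only when $c_1$ is tuned relative to $c_2$ and $c_3$, with the calibration tightest when $d$ is barely $\Omega(\log n)$ and $\Phi_{G_0}$ is near its floor $c_3\log d/d$.
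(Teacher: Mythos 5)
Your reduction of $G_t$ (conditioned on $G_0$, at a \emph{fixed} time $t$) to a product measure over edge slots is correct, and it is a genuinely different route from the paper's: the paper instead tracks $|E_t(S,V\setminus S)|$ and $\vol_t(S)$ as biased birth-and-death chains and bounds hitting times, which buys a statement holding simultaneously for all $t$ in the window (your per-$t$ version matches the literal statement and can be upgraded by a union bound over $t$ at the cost of constants). Your degree bound and your treatment of small and near-linear sets are fine, modulo the caveat you already flag about the hidden constants in assumption~(1) --- a caveat the paper's own argument shares, since the stationary degree is $d$ while $\deg_0(v)$ is only guaranteed to be $\Theta(d)$.

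The genuine gap is the union bound over intermediate-size sets. For $c_1\log n < k \le n/2$ your per-set tail is $\exp(-\Omega(\mu_S))$ with $\mu_S=\Omega(\Phi_{G_0}dk)=\Omega(c_3\,k\log d)$, and this order is essentially unimprovable (when $\rho$ is near $1$ the cut must simply retain a constant fraction of its $\Theta(\Phi_{G_0}dk)$ initial edges). Against this you place $\binom{n}{k}\le\exp(k\log(en/k))$. When $d=\Theta(\log n)$ --- a regime the lemma must cover --- one has $\log d=\Theta(\log\log n)$ while $\log(en/k)=\Theta(\log n)$ for, say, $k=\sqrt{n}$, so the sum over the middle range of $k$ diverges; and your proposed calibration ``choose $c_1$ so that $\log(en/k)\le c_3\log d/2$ for all $k>c_1\log n$'' would force $c_1\log n\ge n^{1-o(1)}$ at the binding value $k=c_1\log n$, so no constant $c_1$ works. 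The missing ingredient is the paper's reduction to connected sets: by Lemma~\ref{lem:conductanceconnect} the conductance of $G_t$ is attained at a set connected in $G_t$, and by Lemma~\ref{lem:connectedsets} (applied to the union graph $G_1\cup\dots\cup G_t$, whose maximum degree is $O(d)$ by the same degree argument) there are at most $n\cdot O(d)^{2k-2}$ such sets of size $k$. Replacing $\binom{n}{k}$ by $n\cdot O(d)^{2k-2}$ turns the comparison into $d^{O(k)}$ versus $\exp(-\Omega(c_3k\log d))=d^{-\Omega(c_3k)}$, which closes once $c_3$ is large. Your fixed-time Chernoff machinery can be retained; only the counting of sets has to change.
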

	The proof of this lemma proceeds as follows: for any $S \subset V$, when an edge is randomly added or removed from the graph, we show that the probability that $|E_t(S,V \setminus S)|$ increases is usually larger than the probability it decreases. Therefore, we model $|E_t(S,V \setminus S)|$ as a random walk on $\Nat$ with a bias towards large values of $|E_t(S,V \setminus S)|$, i.e., a \emph{birth-and-death} chain. Using standard argument about birth-and-death chains, we show in \lemref{conductancelowerbound} that it is very unlikely that $|E_t(S,V \setminus S)|$ becomes much smaller than $|E_0(S,V \setminus S)|$. By a similar argument, in \lemref{upperboundvolume} we also show that the degrees of all nodes in $S$ are approximately the same as their original degrees in $G_0$. This ensures that the conductance of a single set $S$ is preserved after  $t=O(d n \log n)$ steps. We then use a union bound argument to show that, with high probability, the conductance of the entire graph is preserved. For certain value of $d$, however, we cannot afford to use an union bound an \emph{all} the possible sets of vertices. To overcome this, we show that we need to apply this union bound only for connected sets $S$. By carefully bounding the total number of such sets with respect to the maximum degree in $G_0$, we are able to establish the lemma.
	
	We can now give an outline of the proof of \thmref{slowdense}. The idea is to show that $\norm{\frac{\mu_{t+1}}{\pi_{t+1}} - \mathbf{1}}{2,\pi_{t+1}}$ is smaller than $\norm{\frac{\mu_t}{\pi_t} - \mathbf{1}}{2,\pi_t}$ (unless the latter is already very small). We do this by first relating $\norm{\frac{\mu_t}{\pi_t} - \mathbf{1}}{2,\pi_t}$ with $\norm{\frac{\mu_{t+1}}{\pi_t} - \mathbf{1}}{2,\pi_t}$. More precisely, we can use \lemref{spectral} and \lemref{conductancelowerbound} to show that the latter is smaller than the former by a multiplicative factor that depends on $\Phi_{G_0}$. Then, we bound the difference between $\norm{\frac{\mu_{t+1}}{\pi_t} - \mathbf{1}}{2,\pi_t}$ and $\norm{\frac{\mu_{t+1}}{\pi_{t+1}} - \mathbf{1}}{2,\pi_{t+1}}$. In particular, by exploiting the fact that at each step only $O(\log n)$ random edges can be deleted with high probability, we are able to show that $\norm{\frac{\mu_{t+1}}{\pi_{t+1}} - \mathbf{1}}{2,\pi_{t+1}}$ is not much larger than $\norm{\frac{\mu_{t+1}}{\pi_t} - \mathbf{1}}{2,\pi_t}$. Finally, by putting together all these argument, we show that $\norm{\frac{\mu_t}{\pi_t} - \mathbf{1}}{2,\pi_t}$ is monotonically decreasing in $t$, at least until the walk is mixed. This establishes the theorem.
	
	In the following analysis we need a standard Markov chain called \emph{birth-and-death} chain \cite[Chapter 2]{levin2017markov}. It is a random walk $(Z_t)_{t \in \Nat}$ on $\Nat$ whose transition probabilities are position based. Formally, we define
	\begin{equation}\label{eq:bdchaindefn}
	\Delta Z_t : = Z_{t+1} - Z_t =  
	\begin{cases}
	+1 & w.p.\ b_{Z_t}  \\
	0 & w.p. \ r_{Z_t} = 1 - b_{Z_t} - d_{Z_t}\\
	-1 & w.p.\ d_{Z_t} 
	\end{cases},
	\end{equation}
	where $(b_{Z_t}, r_{Z_t}, d_{Z_t})$ are the probabilities that the position of the walk increases by 1, remains, and decreases by 1. $b_{Z_t}$ and $d_{Z_t}$ are functions depending on the position $Z_t$ of the walk. Furthermore, we also need the \emph{hitting time} of a random walk $\tau^a_b := \min\{t \in \Nat: Z_t = b, Z_0 = a \}$ be the time the walk requires to hit $b$ when starting from $Z_0 = a$.  
	
	For a birth-and-death chain moving between $0$ and $m$, the stationary distribution of this Markov chain is given as follows:
	\begin{equation}\label{eq:bdchainstationary}
	\pi(k) = \frac{w_k}{\sum_{j = 0}^{m} w_j},
	\end{equation}
	where
	$
	w_k = \prod_{i = 1}^{k} \frac{b_{i-1}}{d_i}
	$
	and if all $b_{Z_t} = b'$ and $d_{Z_t} = d'$, $w_k = (b'/d')^k$. The birth-and-death Markov chain is \emph{time-reversible} \cite{levin2017markov} as defined in Sec.~\ref{sec:notations}. Hence $\pi(x)P^t(x,y) = \pi(y)P^t(y,x)$ where $P$ is the transition matrix of the Markov chain and $P^t(x,y)$ is the $t$-step transition probability from $x$ to $y$.
	
	To prove Lemma~\ref{lem:conductancelowerbound}, we first lower bound the number of edges in $E_t(S, V \setminus S)$ and then upper bound the volume of $S$ in $G_t$ for any $t = O(nd\log n)$. The following lemma proves that the number of edges on the boundary of any set $S$ will not decrease to $\epsilon$ less than the initial number with high probability. 
	
	\begin{lemma}[Lower bounding $|E_t(S, V \setminus S)|$]\label{lem:lowerboundboundary}
		Given the assumptions in Lemma \ref{lem:conductancelowerbound}, for one set $S$, let $M_t = |E_t(S, V \setminus S)|$ be the number of edges on the boundary of $S$ in $G_t$. Let $M_0 = m_0$ be the initial number. Denote $\Pr{\tau^a_b \le t }$ the probability that a walk starting from $a$ hits $b$ before $t$. 
		Then for any constant $\epsilon \in (0,1)$, we have 
		\[
		\Pr{ \tau^{m_0}_{m_T} = O(n d \log n) } = \begin{cases}
		O(n^{-c_1|S|}) & if\,|S| = O(\log n) \\
		O\left( \frac{\log d}{d} \right)^{-c_2|S|} & if\,|S| = \omega(\log n)
		\end{cases},
		\]
		where $m_T = \lfloor (1 - \epsilon)m_0 \rfloor$ and $c_1, c_2 \ge 4$ are constants. 
	\end{lemma}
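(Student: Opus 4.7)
The plan is to model $M_t$ as (an abstraction of) a birth--and--death chain on $\{0,1,\dots,K\}$ with $K=|S|(n-|S|)$, birth rates $b_k=p(K-k)$, and death rates $d_k=qk$, and then combine reversibility with the explicit stationary distribution from \eqref{eq:bdchainstationary} to convert the hitting-time question into a ratio of stationary weights. First I observe that each of the $K$ boundary edge-slots evolves as an independent two-state Markov chain with switching probabilities $p$ and $q$, so $M_t$ is itself a Markov chain whose stationary distribution is $\mathsf{Bin}(K,\tilde p)$; moreover $M_t$ is reversible, because it is a symmetric projection of a reversible product chain (one can verify the detailed-balance equations $\pi(m)P(m,m')=\pi(m')P(m',m)$ directly using $q\tilde p=p(1-\tilde p)$). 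Although $M_t$ can jump by more than one per round, in the slowly-changing regime the expected per-round boundary change is $pK+qm_0=O(\log n)$, so a standard event-based time rescaling (or a direct stochastic comparison) reduces the analysis to a genuine birth--and--death chain in the sense of \eqref{eq:bdchaindefn} with rates $b_k,d_k$ as above.

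By reversibility, $\pi(m_0)\Pr{M_t=m_T\mid M_0=m_0}=\pi(m_T)\Pr{M_t=m_0\mid M_0=m_T}\le \pi(m_T)$, so a union bound over $t\le T$ yields
\[
\Pr{\tau^{m_0}_{m_T}\le T}\ \le\ T\cdot\frac{\pi(m_T)}{\pi(m_0)}\ =\ T\cdot\prod_{i=m_T+1}^{m_0}\frac{d_i}{b_{i-1}}.
\]
Using $q/p=(1-\tilde p)/\tilde p$ and $m_\infty:=K\tilde p$,
\[
\frac{d_i}{b_{i-1}}\ =\ \frac{qi}{p(K-i+1)}\ =\ \frac{i}{m_\infty}\cdot\frac{1-\tilde p}{1-(i-1)/K}.
\]
Assumption (1) of \thmref{slowdense} gives $m_0\le\vol_0(S)=O(d|S|)\ll K$, so the second factor is $1+o(1)$. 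For $m_0\le m_\infty$, writing $r:=m_0/m_\infty\le 1$ and using $\prod_{k=0}^{\epsilon m_0-1}(1-k/m_0)\le\exp(-\epsilon^2 m_0/2+O(1))$, I obtain
\[
\frac{\pi(m_T)}{\pi(m_0)}\ \le\ r^{\epsilon m_0}\exp\!\bigl(-\epsilon^2 m_0/2\bigr)\ \le\ \exp(-c\,\epsilon^2 m_0)
\]
for a constant $c>0$.

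Finally, I substitute $T=O(nd\log n)=n^{1+o(1)}$ and split by $|S|$. For $|S|\le c_1\log n$, assumption (2) gives $m_0\ge c_2\log n\cdot|S|$, so $T\cdot\exp(-c\epsilon^2 m_0)=n^{1+o(1)-\Omega(c_2|S|)}$, which matches the claimed $O(n^{-c_1|S|})$ provided $c_2$ is chosen large enough. For $|S|=\omega(\log n)$, assumption (3) combined with $\vol_0(S)=\Theta(d|S|)$ yields $m_0\ge\Phi_{G_0}\cdot\vol_0(S)=\Omega(|S|\log d)$, and the same calculation gives $\exp(-\Omega(|S|\log d))=d^{-\Omega(|S|)}$, matching (up to logarithmic factors absorbed into the constant $c_2$) the claimed $(\log d/d)^{c_2|S|}$-type bound.

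\textbf{Main obstacle.} The technical heart is the step from $M_t$ to the genuine birth--and--death chain: the true process may flip several edges in one round, so the transition kernel is not tridiagonal. One has to either rescale time to a per-edge-event clock (and control the number of events in $T$ rounds by a Chernoff estimate) or argue directly that the reversibility ratio $\pi(m_T)/\pi(m_0)$ of the exact chain coincides with the birth--and--death ratio up to constants. A secondary subtlety is the regime $m_0>m_\infty$ (high initial conductance), where the drift is downward and a drop may genuinely occur; this case must be handled separately in the outer \lemref{conductancelowerbound}, since even after the boundary reaches $m_\infty$ the resulting conductance $m_\infty/\vol_0(S)$ is still adequate.
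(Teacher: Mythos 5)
Your proposal follows essentially the same route as the paper's own proof: both model the boundary count as a birth-and-death chain (after unpacking the $O(\log n)$ per-round edge flips into single-edge events), invoke reversibility of that chain to bound the probability of reaching $m_T$ within $T$ steps by $T\cdot\pi(m_T)/\pi(m_0)$, and then convert the lower bounds on $m_0$ coming from assumptions (2) and (3) into the two claimed tail bounds. The only substantive difference is that you compute the stationary ratio exactly from the binomial law $\mathsf{Bin}(K,\tilde p)$ while the paper bounds the per-step drift ratio $b/d$ below by a constant ($4$ resp.\ $d/\log d$); the two obstacles you flag (non-tridiagonal transitions and the $m_0>m_\infty$ regime) are present in, and handled equally informally by, the paper's argument.
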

	
	\begin{proof}
		By the settings of the regime and a Chernoff bound, the number of total changes in the graph is upper bounded by $O(\log n)$, and this holds for each of the first $n^3$ steps with probability $1 - n^{-\Omega(1)}$. Hence in one graph changing step, the quantity $|E_t(S, V \setminus S)|$ changes by at most $O(\log n)$. In the following analysis, we condition on this event being true. 
		
		We study the changes of the number of edges in $E_t(S, V \setminus S)$ by modeling it as a birth-and-death Markov chain. As mentioned above, from $M_t$ to $M_{t+1}$, there are at most $O(\log n)$ possible modifications. If we ``unpack'' them into single changes, then each change can be regarded as a random change on the number of edges in $E(S, V \setminus S)$. Each change on an edge slot can add an edge if there is no edge, remove an edge if there is an edge, or keep it as it is. Hence we build a birth-and-death chain $(M'_s)_{s \in \Nat}$ where $M'_0 = M_0 = m_0 = m'_0$ and $m_T = m'_S = \lfloor (1-\epsilon)m_0 \rfloor$. $M'_s$ is still the number of edges in $E(S, V \setminus S)$, but the time stamp represents the number of random changes we apply on the graph. From $M_t$ to $M_{t+1}$, the graph changing step can contribute $O(\log n)$ such random changes for $(M'_s)_{s \in \Nat}$. We denote $s(t)$ be the total number of random changes before time $t$ (included). 
		
		Hence the transition probability of this birth-and-death chain is then defined as: for all $s \in [s(t), s(t+1)]$,
		\[
		\Delta M'_s : = M'_{s+1} - M'_s =  
		\begin{cases}
		+1 & w.p.\ b_{M'_s} \ge \left(1 - \frac{M_t + O(\log n)}{|S|(n - |S|)} \right) \cdot p \\
		0 & w.p. \ r_{M'_s} = 1 - b_{M'_s} - d_{M'_s}\\
		-1 & w.p.\ d_{M'_s} \le \frac{M_t + O(\log n)}{|S|(n - |S|)} \cdot q
		\end{cases}
		\]
		When the chain is at the position $M'_s$ and one random change happens, the probability that it causes an edge addition is $\left(1 - \frac{M'_s}{|S|(n - |S|)} \right) \cdot p$ because it first needs to choose an empty slot in $E(S, V \setminus S)$ with probability $\left(1 - \frac{M'_s}{|S|(n - |S|)} \right)$ and then with probability $p$ it adds an edge. However note that from $M_t$ to $M_{t+1}$, the random changes are not independent because if one random change picks one edge slot, the other changes caused by the same graph changing step cannot choose the same edge slot again. Hence the actual number of empty slot is in fact lower bounded by $|S|(n - |S|) - M_t - O(\log n)$. Hence the probability $b_{M'_s}$ has the lower bound as shown above. This argument works similarly for $d_{M'_s}$. Due to the assumption that the initial graph has degree $\Omega(\log n)$, these bounds would still remain to be of the same order.
		
		Recall that $p = \Omega\left(\frac{\log n}{n^2} \right)$ and $q = O\left( \frac{\log n}{|E|} \right)$, hence the ratio is 
		\[
		\frac{b_{M'_s}}{d_{M'_s}} \ge \frac{(|S|(n - |S|) - M_t + O(\log n)) \cdot p}{(M_t + O(\log n)) \cdot q} = \Omega(|S|).
		\]
		The current regime can give us appropriate constants such that $\frac{b_{M'_s}}{d_{M'_s}} \ge 4$. Even if at the beginning the ratio may not satisfy this condition, there always exists a constant threshold $\epsilon'$ such that when $M'_s$ falls below $(1 - \epsilon')m'_0$, $\frac{b_{M'_s}}{d_{M'_s}}$ becomes less than 4. Hence w.l.o.g. we can wait until then and assume we start with $\frac{b_{M'_s}}{d_{M'_s}} \ge 4$. Note that $m'_0$ depends on $|S|$ so we do the following case analysis. 
		
		\paragraph{For small sets, $|S| \le 100\log n$:} By the assumption that the conductance is a constant, we have $M_0 = \Omega(|S|\log n)$. By (\ref{eq:bdchainstationary}) $\pi(m'_S) = O(1/4^{\epsilon m'_0})$ and $\pi(m'_0) \ge 3/4$. By the definition of the reversibility of a Markov chain \cite{levin2017markov}: for any time $t$,
		\[
		\pi(m'_0)p_t(m'_0, m'_S) = \pi(m'_S) p_t(m'_S, m'_0).
		\]
		where $p_t(x,y)$ means the $t$-step transition probability from $x$ to $y$. Hence
		\[
		p_t(m'_0, m'_S) \le \pi(m'_S) \frac{p_t(m'_S, m'_0)}{\pi(m'_0)}  \le \frac43 \pi(m'_S).
		\]
		Then we plug in $\pi(m'_t) = O(1/4^{\epsilon m'_0})$,
		\begin{align*}
		\Pr{\tau^{m_0}_{m_T} = O(nd\log n) } &= \Pr{\tau^{m'_0}_{m'_S} = O(nd\log^2 n) }\\
		& = \sum_{i = 0}^{O(nd\log^2 n)} p_i(m'_0, m'_S) \\
		&\le S \cdot \frac43 \pi(m'_S) \\
		&= O(n^{-c_1|S|}),
		\end{align*}
		where $c_1 \ge 4$ is a constant.
		
		\paragraph{For larger sized sets, $|S| > 100 \log n$:} By our assumption that the conductance is $\Omega(\log d/d)$. We have $m'_0 = \Omega(|S|\log d)$. Then $\frac{b_{M'_s}}{d_{M'_s}}$ becomes $O(d/\log d)$. Hence $\pi(m'_S) = O((\log d/d)^{\epsilon m'_0})$ and $\pi(m'_0) \ge 1 - O(\log d/d)$. By applying a similar analysis we have
		\begin{align*}
		\Pr{\tau^{m'_0}_{m'_S} = O(n d \log^2 n) } &= \sum_{i = 0}^{O(n\log^3 n)} p_i(m'_0, m'_S)\\ 
		& \le T \cdot \frac{1}{1 - O(\log d/d)} \cdot \pi(m'_S) \\
		&= O((\log d/d)^{-c_2 |S|})
		\end{align*}
		where $c_2 \ge 4$ is a constant. 
	\end{proof}
	
	\begin{lemma}[Upper bounding the volume, $\vol(S)$]\label{lem:upperboundvolume}
		Given the assumption in Lemma \ref{lem:conductancelowerbound}, for one set $S$, let $N_t = \vol_t(S)$, the sum of the degrees of the vertices of $S$ in $G_t$. Let $N_0 = n_0$ be the initial volume. Denote by $\Pr{\tau_b^a \le t}$ the probability that a walk starting at $a$ hits $b$ before $t$. 
		Then for any constant $\delta \ge 0$, we have
		\[
		\Pr{\tau^{n_0}_{n_T} = O(n d \log n)} = O(n^{-c|S|}),
		\]
		where $n_T = \lceil (1 + \delta)n_0 \rceil$ and $c \ge 4$.
	\end{lemma}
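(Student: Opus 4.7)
The plan is to mirror the proof of Lemma~\ref{lem:lowerboundboundary}, with the direction of the drift reversed. First, condition on the high-probability event that at most $O(\log n)$ slots flip in each of the first $O(nd\log n)$ graph steps (Chernoff plus a union bound). Unpack each graph transition into single-slot updates to obtain an auxiliary birth-and-death chain in the spirit of $M'_s$ from Lemma~\ref{lem:lowerboundboundary}. It is cleanest to track $K_s$, the number of edge slots incident to $S$ that are currently present: this quantity changes by exactly $\pm 1$ per unpacked step, and since $\vol_t(S) \le 2 K_{s(t)}$, bounding $K_s$ from above by $n_T/2$ is sufficient. The horizon for $K_s$ is then $O(nd\log^2 n)$ unpacked steps.

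The chain $K_s$ evolves on $\{0,\dots,N\}$ with $N = \binom{|S|}{2} + |S|(n-|S|) \le |S|n$, and birth/death rates $b_K \le (N-K)p$ and $d_K \ge K q$. Using assumption~(1) in the sharp form $n_0 \ge (1-o(1))|S|d$ and $\tilde{p} = d/(n-1) = o(1)$, at $K \ge n_T = (1+\delta) n_0$ one obtains
\[
\frac{d_K}{b_K} \;\ge\; \frac{K q}{(N-K)p} \;\ge\; (1+\delta)(1-\tilde{p})(1-o(1)) \;\ge\; 1+\delta/2,
\]
the downward-drift analogue of $b/d \ge 4$ from Lemma~\ref{lem:lowerboundboundary}. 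The stationary distribution of $K_s$ is $\Bin(N, \tilde{p})$, so a Chernoff tail bound gives $\pi(\{K \ge n_T\}) \le \exp(-c_1 \delta^2 |S| d) \le n^{-c_2 |S|}$, using $d = \Omega(\log n)$ with a sufficiently large implicit constant. Reversibility of the birth-and-death chain then yields
\[
p_s(K_0, n_T/2) \;\le\; \pi(n_T/2) / \pi(K_0) \;\le\; \pi(n_T/2) \cdot O(\sqrt{|S| d}),
\]
since $K_0$ lies within $O(\sqrt{|S|d})$ of the Binomial mean, so $\pi(K_0) = \Omega(1/\sqrt{|S| d})$. Union-bounding over the $O(nd \log^2 n)$ unpacked steps gives
\[
\Pr{\tau^{n_0}_{n_T} \le O(nd \log n)} \;\le\; O(nd \log^2 n) \cdot O(\sqrt{|S| d}) \cdot n^{-c_2 |S|} \;=\; O(n^{-c |S|}),
\]
for any desired $c \ge 4$, after absorbing the polynomial prefactor by enlarging the constant in $d = \Omega(\log n)$.

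The main obstacle is making the downward-bias inequality $d_K / b_K \ge 1+\delta/2$ quantitatively precise on $[n_T, N]$. In Lemma~\ref{lem:lowerboundboundary}, the ratio was bounded by a large absolute constant, whereas here the bias is intrinsically close to $1$ and depends on the ratio $n_T / (|S| d)$. One must invoke assumption~(1) sharply enough that the stationary mean $N \tilde{p} = (1-o(1))|S| d$ does not overshoot $n_T$, so that the Chernoff tail delivers the exponent $c_2 \ge 4$. Unlike Lemma~\ref{lem:lowerboundboundary}, no case split on $|S|$ is required, since the Binomial tail is uniformly strong and assumption~(2) does not enter the argument.
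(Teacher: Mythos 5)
Your overall strategy---unpack the $O(\log n)$ per-round flips into single-slot updates, view the result as a birth-and-death chain with downward drift above a threshold, bound the stationary tail, and convert to a hitting-time bound via reversibility plus a union bound over the time horizon---is exactly the route the paper takes; there the chain is run on $\vol_t(S)$ itself, with $b/d<1$ above $(1+\delta')n_0$, and the tail is read off from the product formula for the stationary distribution rather than from an explicit Binomial Chernoff bound. The genuine problem is your reduction from $\vol_t(S)$ to $K_s$, the number of present slots incident to $S$. To conclude $\vol_t(S)\le 2K_{s(t)}<n_T$ you must keep $K_s$ below $n_T/2$, but the inequality $\vol(S)\le 2K$ is tight only when every edge incident to $S$ is internal; in this regime almost all such edges are boundary edges, so $K_0\approx n_0>n_T/2=(1+\delta)n_0/2$ whenever $\delta<1$. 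The chain then starts \emph{above} your target state, the hitting event has already occurred at time $0$, and the argument is vacuous for $\delta\le 1$ (the lemma is claimed for every constant $\delta\ge 0$). The same mismatch appears internally: your drift bound $d_K/b_K\ge 1+\delta/2$ is derived for $K\ge n_T$, yet the reversibility step targets the state $n_T/2\approx(1+\delta)|S|d/2$, which for $\delta<1$ lies \emph{below} the Binomial mean $N\tilde{p}\approx|S|d$; there $\pi(n_T/2)$ is of order $1/\sqrt{|S|d}$, not exponentially small, so $p_s(K_0,n_T/2)\le\pi(n_T/2)\cdot O(\sqrt{|S|d})=O(1)$ yields nothing.

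The fix is minor: run the chain on $\vol_t(S)$ directly (a flipped slot changes the volume by $1$ or $2$, which does not affect the drift or the tail computation), or control boundary and internal edges by two separate $\pm 1$ chains whose thresholds sum to $n_T$. A secondary issue is the claim $\pi(K_0)=\Omega(1/\sqrt{|S|d})$: assumption (1) only gives $\deg_0(x)=\Theta(d)$, hence $n_0=\Theta(|S|d)$, possibly a constant factor away from the stationary mean, in which case $\pi(K_0)$ is itself exponentially small and your ratio bound is not justified as stated. This is repairable by bounding $\pi(\mathrm{target})/\pi(K_0)$ via the telescoping product of the ratios $b_k/d_{k+1}$, each at most a constant less than $1$ beyond the threshold, which is in effect what the paper's ``same birth-and-death chain technique'' does.
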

	
	\begin{proof}
		Denote $N_t$ the volume of the set $S$ at time $t$ then we have another birth-and-death chain $(N'_s)_{s \in \Nat}$ similar to the previous proof where $N'_0 = N_0 = n_0$. Let $s(t)$ denote the number of random changes before time $t$ (included). For all $s \in [s(t), s(t+1)]$, 
		
		\[
		\Delta N'_s : = N'_{s+1} - N'_s =  
		\begin{cases}
		1 & w.p.\ b_{N'_s} \le \left( 1 - \frac{N_t - O(\log n)}{(n-1)|S|} \right) \cdot p \\
		0 & w.p.\ 1 - b_{N'_s} - d_{N'_s}\\
		-1 & w.p.\ d_{N'_s} \ge \frac{N_t - O(\log n)}{(n-1)|S|} \cdot q
		\end{cases}
		\]
		In general this proof is similar to the previous one. The ratio between $b$ and $d$ is
		\[
		\frac{b}{d} \le O\left( \frac{((n-1)|S| - N_t + O(\log n)) \cdot p}{(N_t - O(\log n)) \cdot q} \right).
		\]
		This ratio is always a constant in our regime. Similar to the previous proof, we may not have a good ratio at the beginning, but there is always a constant threshold $\delta'$ such that when $N'_s = (1+\delta')N'_0$ the ratio $b/d$ is less than 1. Then by applying the same birth-and-death chain technique, we prove the lemma. 
	\end{proof}
	
	Now by combining the above two lemmas we can prove Lemma \ref{lem:conductancelowerbound}.
	
	\begin{proof}[Proof of Lemma \ref{lem:conductancelowerbound}]
		We have showed that for one set $S$, within $O(n\log^2 n)$ steps, the number of the edges on the boundary will not be $\epsilon$ smaller and the volume will not be $\delta$ larger than they originally were with high probability. 
		
		We apply a union bound to bound the conductance $\Phi_G$ of the entire graph. By Lemma \ref{lem:conductanceconnect}, $\Phi_G$ is revealed by just looking at the connected sets. Let $\tilde{G}$ be the union of all graphs from $G_1,..., G_t$. Then all the possible connected sets that ever exist in $\cg$ can be found in $\tilde{G}$. The number of all connected sets for one certain graph $G_i$ is upper bounded by those in the union graph. By Lemma \ref{lem:connectedsets}, the number of connected sets of size $|S|$ is bounded by $n \cdot \Delta^{|S|}$ where $\Delta$ is the maximum degree in $\tilde{G}$. By applying the birth-and-death chain argument for $|S| = 1$, the maximum degree $\Delta$ of the union graph should be upper bounded by $O(d)$. Hence below we use $n \cdot O(d^{2|S|-2})$ for the union bound when needed.
		
		Denote $\ce_1 (S)$ the event that the number of the edges on the boundary of $S$ ever reaches the $\epsilon$ less than the beginning. By union bound and Lemma \ref{lem:lowerboundboundary}, the probability that there exists such a set is upper bounded by
		\begin{align*}
		\Pr{\bigcup_{S \subseteq V} \ce_1(S) } &\le \Pr{\bigcup_{\substack{S \subseteq V \colon \\ |S| = O(\log n)}} \ce(S) }\\
		& \qquad + \Pr{\bigcup_{\substack{S \subseteq V \colon \\ |S| \in [\omega(\log n), O(n/\log n)]}} \ce_1(S) }\\ 
		& \qquad  + \Pr{\bigcup_{\substack{S \subseteq V \colon \\ |S| \in [\omega(n/\log n), O(n)]}} \ce_1(S) } \allowdisplaybreaks \\
		&\le \sum_{|S| = O(\log n)} n \cdot \Delta^{|S|} \cdot O(n^{-c_1|S|}) \\ 
		& \qquad + \sum_{[\omega(\log n), O(n/\log n)]} n \cdot \Delta^{|S|}\cdot O\left( \left(\frac{\log d}{d} \right)^{-c_2|S|} \right) \\
		& \qquad + \sum_{[\omega(n/\log n), O(n)]} \binom{n}{|S|}O\left( \left(\frac{\log d}{d} \right)^{-c_2|S|} \right) \allowdisplaybreaks\\
		& \le O(n^{-c_3})
		\end{align*}
		where $c_1, c_2$ are the constants used in Lemma \ref{lem:lowerboundboundary} and $c_3 \ge 4$ is a constant. 
		
		Denote $\ce_2 (S)$ the event that the volume of $S$ reaches $\delta$ larger than the beginning. By union bound and Lemma \ref{lem:upperboundvolume}, the probability that there exists such a set is upper bounded by
		\begin{align*}
		\Pr{\bigcup_{S \subseteq V} \ce_2(S) } \le \sum_{S \subseteq V} \binom{n}{|S|} O(n^{-c|S|}) = O(n^{-c'})
		\end{align*}
		where $c$ is the constant used in Lemma \ref{lem:upperboundvolume} and $c' \ge 4$ is a constant. 
		
		By combining everything above, we can lower bound the conductance. If the initial conductance is lower bounded by $\phi$, i.e.,
		$
		\Phi^0_G(S) = \frac{X_0}{Y_0} \ge \phi 
		$
		and
		\[
		\Phi_G^{t}(S) \ge \frac{ (1 - \epsilon) X_0}{(1+\delta) Y_0} \ge \frac{ (1 - \epsilon) \phi}{(1 + \delta)},
		\]
		where $\epsilon, \delta$ are the constants used in Lemma \ref{lem:lowerboundboundary} and Lemma \ref{lem:upperboundvolume}. For simplicity and also because of the arguments we have made along this proof, we choose come constants to be 4 in our final statement of the lemma.
	\end{proof}
	
	\begin{proof}[Proof of Theorem \ref{thm:slowdense}]
		We establish the theorem by showing unless $\nor{\frac{\mu_t}{\pi_t} - \mathbf{1}}{2, \pi_t}$ is already small, $\nor{\frac{\mu_t}{\pi_t} - \mathbf{1}}{2, \pi_t}$ will significantly decrease at each step. In particular we relate $\nor{\frac{\mu_t}{\pi_t} - \mathbf{1}}{2, \pi_t}$ to $\nor{\frac{\mu_{t+1}}{\pi_{t+1}} - \mathbf{1}}{2, \pi_{t+1}}$ in two steps:
		\begin{enumerate}
			\item[(1)] We lower bound the change between $\nor{\frac{\mu_t}{\pi_t} - \mathbf{1}}{2, \pi_t}$ and $\nor{\frac{\mu_{t+1}}{\pi_t} - \mathbf{1}}{2, \pi_t}$; 
			\item[(2)] We upper bound the difference between $\nor{\frac{\mu_{t+1}}{\pi_t} - \mathbf{1}}{2, \pi_t}$ and $\nor{\frac{\mu_{t+1}}{\pi_{t+1}} - \mathbf{1}}{2, \pi_{t+1}}$.
		\end{enumerate}
		
		\paragraph{Step 1:} For the first step, we use a spectral argument. Using Lemma \ref{lem:spectral}:
		\[
		\norm{\frac{\mu_t}{\pi_t} - \mathbf{1}}{2, \pi_t}^2 - \norm{\frac{\mu_{t+1}}{\pi_t}^2 - \mathbf{1}}{2, \pi_t}^2 \ge (1 - \lambda_2^2(P_t))\norm{\frac{\mu_t}{\pi_t} - \mathbf{1}}{2, \pi_t}^2,
		\]
		by rearranging terms we get
		\[
		\norm{\frac{\mu_{t+1}}{\pi_t} - \mathbf{1}}{2, \pi_t}^2 \le \lambda_2^2(P_t) \norm{\frac{\mu_t}{\pi_t} - \mathbf{1}}{2, \pi_t}^2,
		\]
		where $\lambda_2(P_t)$ is the second largest eigenvalue of $P_t$, the transition matrix of $G_t$.
		
		\paragraph{Step 2:} Next step we upper bound the difference between $\nor{\frac{\mu_{t+1}}{\pi_t} - \mathbf{1}}{2, \pi_t}$ and $\nor{\frac{\mu_{t+1}}{\pi_{t+1}} - \mathbf{1}}{2, \pi_{t+1}}$. Due to the randomness of the graph we will compute the expectation of this difference. In the following analysis we condition on the event that at any time $t$, $|E_t| \in [(1-o(1))nd, (1+o(1))nd]$ where $d = (n-1)\tilde{p}$. This event has probability $1 - o(1)$. Recall that
		\[
		\norm{\frac{\mu_t}{\pi_t} - \mathbf{1}}{2, \pi_t}^2 = \sum_{y \in V} \pi_t(y) \left( \frac{\mu(y)}{\pi_t(y)} - 1 \right)^2 = \left( \sum_{y \in V} \frac{\mu^2_t(y)}{\pi_t(y)} \right) - 1. 
		\]
		Hence we have
		\begin{align}
		&\Ex{ \norm{\frac{\mu_{t+1}}{\pi_{t+1}} - \mathbf{1}}{2, \pi_{t+1}}^2  - \norm{\frac{\mu_{t+1}}{\pi_t} - \mathbf{1}}{2, \pi_t}^2 }  \allowdisplaybreaks \nonumber\\
		& = \Ex{ \left( \sum_{y \in V} \frac{\mu^2_{t+1}(y)}{\pi_{t+1}(y)} \right) - \left( \sum_{y \in V} \frac{\mu^2_{t+1}(y)}{\pi_t(y)} \right) } \allowdisplaybreaks \nonumber\\
		& = \sum_{y \in V} \Ex{ \mu^2_{t+1}(y)\left( \frac{1}{\pi_{t+1}(y)} - \frac{1}{\pi_t(y)} \right) }\allowdisplaybreaks \nonumber\\
		& = \sum_{y \in V} \Ex{ \mu^2_{t+1}(y)\left( \frac{2|E_{t+1}|}{\deg_{t+1}(y)} - \frac{2|E_t|}{\deg_t(y)} \right) }\allowdisplaybreaks \nonumber\\
		& \le 2(1+o(1))|E| \sum_{y \in V} \Ex{ \mu^2_{t+1}(y) \left( \frac{1}{\deg_{t+1}(y)} - \frac{1}{\deg_t(y)} \right) }\allowdisplaybreaks \nonumber\\
		& \le 2(1+o(1))|E| \sum_{y \in V}  \mu^2_{t+1}(y) \Ex{\left( \frac{1}{\deg_{t+1}(y)} - \frac{1}{\deg_t(y)} \right) }\allowdisplaybreaks \label{line:1}\\
		& \le 2(1+o(1))|E| \sum_{y \in V}  \mu^2_{t+1}(y) \frac{\epsilon \deg_t(y)}{\deg_t(y)\cdot(1-\epsilon)\deg_t(y)}(1 - (1-q)^{\deg_t(y)})\allowdisplaybreaks \label{line:2}\\
		& \le \frac{2(1+o(1))}{1 - o(1)} \sum_{y \in V} \frac{\epsilon }{(1-\epsilon)} \cdot \frac{\mu^2_{t+1}(y)}{\deg_t(y)/((1-o(1))|E|)}(1 - (1-q)^{\deg_t(y)}) \allowdisplaybreaks \nonumber\\
		& \le \frac{2(1+o(1))}{1 - o(1)} \cdot \frac{\epsilon }{(1-\epsilon)} (1 - (1-q)^{\deg_t(y)}) \sum_{y \in V}  \frac{\mu^2_{t+1}(y)}{\pi_t(y)} \allowdisplaybreaks \nonumber\\
		& \le O\left( \frac{\log n}{n} \right)\left(\norm{\frac{\mu_{t+1}}{\pi_t} - \mathbf{1}}{2, \pi_t}^2 + 1\right) \allowdisplaybreaks \label{line:3}
		\end{align}
		where $|E| = nd$ and $d = (n-1)\tilde{p}$. The $\epsilon$ is the constant used by Lemma \ref{lem:conductancelowerbound}. $\deg_{t+1}(y)$ will not decrease to $\epsilon$ smaller than $\deg_{t}(y)$ with probability $1 - O(n^{-c})$. From line (\ref{line:1}) to line (\ref{line:2}) we upper bound the expectation by only considering the cases where the difference is positive, i.e., $\deg_t(y) \ge \deg_{t+1}(y)$. In line (\ref{line:2}), by \lemref{conductancelowerbound} we know $\deg_{t+1}(y)$ will not be smaller than $\frac{1}{2} \cdot \deg_{t}(y)$ with probability $1 - O(n^{-4})$. Moreover, the probability $1  - (1 - q)^{\deg_t(y)}$ is the probability that at least one of the edges connected to $y$ at time $t$ changes at $t+1$. In line (\ref{line:3}), we hide unimportant constants in the $O$-notation and we use the inequality $(1-q)^{\deg_t(y)} \ge 1 - q \cdot \deg_t(y)$. Since $q = O(\log n/(dn))$ by assumption, we get $O(\log n/n)$ in line (\ref{line:3}). 
		
		By combining the two steps above we have
		\begin{align*}
		&\Ex{ \norm{\frac{\mu_{t+1}}{\pi_{t+1}} - \mathbf{1}}{2, \pi_{t+1}}^2  - \norm{\frac{\mu_t}{\pi_t} - \mathbf{1}}{2, \pi_t}^2 } \allowdisplaybreaks\\
		& \le O\left( \frac{\log n}{n} \right) \left(\norm{\frac{\mu_{t+1}}{\pi_t} - \mathbf{1}}{2, \pi_t}^2 + 1 \right)  - (1 - \lambda_2^2(P_t)) \norm{\frac{\mu_t}{\pi_t} - \mathbf{1}}{2, \pi_t}^2\allowdisplaybreaks\\
		& \le O\left( \frac{\log n}{n} \right) \left( \lambda_2^2(P_t) \norm{\frac{\mu_t}{\pi_t} - \mathbf{1}}{2, \pi_t}^2 + 1 \right)  - (1 - \lambda_2^2(P_t)) \norm{\frac{\mu_t}{\pi_t} - \mathbf{1}}{2, \pi_t}^2\allowdisplaybreaks\\
		& \le \left(\frac{n+\log n}{n}\cdot \lambda_2^2(P_t) - 1 \right) \norm{\frac{\mu_t}{\pi_t} - \mathbf{1}}{2, \pi_t}^2 + O\left(\frac{\log n}{n} \right)
		\end{align*}
		Therefore, it holds that
		\[
		\Ex{ \norm{\frac{\mu_{t+1}}{\pi_{t+1}} - \mathbf{1}}{2, \pi_{t+1}}^2 } \le \left(\frac{n + \log n}{n} \right) \lambda_2^2(P_t) \cdot   \norm{\frac{\mu_t}{\pi_t} - \mathbf{1}}{2, \pi_t}^2 + O\left( \frac{\log n}{n} \right).
		\]
		By Theorem \ref{thm:cheeger} and the laziness of the walk,
		\[
		\frac{\Phi_{G_t}^2}{2} \le 1 - \lambda_2(P_t) \le 2\Phi_{G_t}.
		\]
		Since we assume the conductance is lower bounded by $O(\log d/d)$, we have $\lambda_2(P_t) \le 1 - O(\log^2 d/d^2)$ and hence $((n + \log n)/n)\lambda_2^2(P_t) \le 1$. Therefore in expectation the $\ell_2$ distance shrinks by a factor less than 1 but with an additive term. 
		By a similar analysis for the static graph in \cite{levin2017markov}, after $O(\log n/\Phi_{G_0}^2)$ rounds, the expected distance to $\pi_t$ is at most $O(\sqrt{\log n/n})$. By Lemma \ref{lem:conductancelowerbound}, we know this holds for $poly(n)$ time. Hence it suffices to apply Markov's inequality and union bound to show that the expected distance is small with probability $1 - O(n^{-c})$ on a polynomially long time interval as required in the mixing time notion.
	\end{proof}

	\subsection{Negative result for mixing in the sparse and slowly changing case}
	
	\slowsparse*
	\begin{proof}
		Consider the graph $G_0 \sim \cg(n,\tilde{p})$. Notice that $\tilde{p}=o(\log{n}/n)$ is well below the connectivity threshold of \erdosrenyi random graphs. Therefore, with high probability, there is at least one isolated vertex in $G_0$; call this vertex $u$ and assume the random walk starts from that vertex. The probability that $u$ remains isolated in the steps $1,2,\ldots,t$ is at least
		\[
		(1-p)^{(n-1) \cdot t} \ge (1-O(1/n^2))^{(n-1) \cdot t} \geq 1-O(t/n).
		\]
		Therefore, with at least constant nonzero probability, there exists a constant $c>0$ such that, for any $t \le c\cdot n$, $\mu_t^u(u) = 1$. Since $\pi_t(u) = 0$, this implies that $\norm{\mu_t^u - \pi^t}{TV} = 1$.
	\end{proof}
	
	Actually the proof reveals a stronger ``non-mixing'' property; if the random walk starts from a vertex that is isolated in $G_0$, then this vertex will remain isolated for $\Theta(1/(np))$ rounds in expectation, and in this case the random walk did not move at all! 
	

	\section{Conclusion}
	In this work we investigated the mixing time of random walks on the edge-Markovian random graph model. Our results cover a wide range of different densities and speeds by which the graph changes. On a high level, these findings provide some evidence of the intuition that both of the two properties ``high density'' and ``slow changes'' correlate with fast mixing. 
	
	For further work, one interesting setting that is not fully understood is the semi-sparse ($d=\Theta(\log n)$) and fast-changing ($q=\Omega(1)>0$) case. While we proved that the random walk achieves some coarse mixing in $O(\log n)$, we conjecture that strong mixing is not possible. Another possible direction for future work is, given the bounds on the mixing time at hand, to derive tight bounds on the cover time. Finally, it would be also interesting to study the mixing time in a dynamic random graph model, where not all edge slots are present (similar to the models studied in~\cite{SpirakisCover18,hermonSousi}, where the graph at each step is a random subgraph of a (possibly sparse) network).
	%
	%
	%
	\bibliographystyle{acm}
	\bibliography{bib}

	\newpage
	\appendix
	
	\section{Mixing times for the graph chain of edge-Markovian models} \label{sec:graphchain}
	It is well known that the edge-Markovian graph model $\cg(n,p,q)$ converges to an \erdosrenyi model $\cg(n, \tilde{p})$ where $\tilde{p} = \frac{p}{p+q}$, which is the stationary distribution of the original edge-Markovian model. The mixing time of the graph chain has not been proven formally in previous works. Hence, we provide a proof for the sake of completeness. We remark that since an edge-Markovian model is a time-homogeneous (i.e., static) Markov chain, the classical definition  of mixing time (Definition \ref{def:staticmixingtime}) applies.

	\begin{theorem}[Graph chain mixing time]\label{thm:graphmixing}
		For an edge-Markovian model $\cg(n,p,q)$, the graph distribution converges to the graph distribution of the random graph model $\cg(n, \tilde{p})$ where $\tilde{p} = \frac{p}{p+q}$. For any $\epsilon \in (0,1)$, the mixing time of the graph chain $\cg(n,p,q)$ is $\tmix(\epsilon) = O\left(\frac{\log (n/\epsilon) }{\log(1/|1 - p - q|)} \right)$ for $p+q \ne 1$, and $\tmix(\epsilon) = 1$ if $p+q = 1$.
	\end{theorem}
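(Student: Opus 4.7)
The plan is to exploit the fact that the graph chain $\cg(n,p,q)$ decomposes into $\binom{n}{2}$ \emph{independent} two-state Markov chains, one per edge slot, as is immediate from \defref{edgemarkovian}. For a single slot, the transition matrix is
\[
M = \begin{pmatrix} 1-p & p \\ q & 1-q \end{pmatrix},
\]
with eigenvalues $1$ and $1-p-q$, and with unique stationary distribution $\mathrm{Bernoulli}(\tilde{p})$ for $\tilde{p}=p/(p+q)$. Mixing on one edge slot is the main ingredient; combining slots will then be routine.

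First I would analyze a single edge slot. Writing $\mu_t(1)$ for the probability the slot is ``on'' at time $t$, a direct one-step computation gives
\[
\mu_{t+1}(1) - \tilde{p} \;=\; (1-p-q)\bigl(\mu_t(1) - \tilde{p}\bigr),
\]
so by iteration $|\mu_t(1)-\tilde p|=|1-p-q|^t\cdot|\mu_0(1)-\tilde p|\le |1-p-q|^t$ for any starting state. Since the total variation distance between two Bernoulli distributions is exactly the difference of their means, the TV distance from stationarity after $t$ steps is at most $|1-p-q|^t$. In the degenerate case $p+q=1$, this quantity vanishes after a single step; equivalently, $M$ has identical rows $(1-p,p)$, so regardless of the initial state each edge is distributed as $\mathrm{Bernoulli}(\tilde p)$ after one step, which (by independence across slots) immediately yields $\tmix(\epsilon)=1$.

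For the case $p+q\neq 1$, I would lift the per-edge bound to the whole graph via the standard subadditivity of total variation for product measures: if $\mu=\bigotimes_i \mu_i$ and $\pi=\bigotimes_i \pi_i$ on the same product space, then $\|\mu-\pi\|_{TV}\le\sum_i\|\mu_i-\pi_i\|_{TV}$. Applied to the $\binom{n}{2}$ edge slots (each with the marginal bound above) and to any deterministic starting graph $G_0$, this gives
\[
\max_{G_0}\,\nor{\Pr{G_t\in\cdot\mid G_0}-\cg(n,\tilde p)}{TV}\;\le\;\binom{n}{2}\,|1-p-q|^t.
\]
Requiring the right-hand side to be at most $\epsilon$ and solving for $t$ yields
\[
t \;\ge\; \frac{\log\bigl(\binom{n}{2}/\epsilon\bigr)}{\log(1/|1-p-q|)} \;=\; O\!\left(\frac{\log(n/\epsilon)}{\log(1/|1-p-q|)}\right),
\]
which is exactly the claimed bound. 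Along the way this also verifies convergence to $\cg(n,\tilde p)$ (since the right-hand side tends to $0$).

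I do not expect any genuinely hard step: the only two non-trivial ingredients are the one-line recursion $\mu_{t+1}(1)-\tilde p=(1-p-q)(\mu_t(1)-\tilde p)$ and the TV-subadditivity lemma for product measures, both standard. If anything, the mildest subtlety is handling the absolute value $|1-p-q|$ uniformly (the contraction can be negative, but the absolute-value recursion still holds) and recording that the bound holds uniformly over the worst-case starting graph $G_0$, which is automatic because the per-slot bound $|\mu_0(1)-\tilde p|\le 1$ is independent of $G_0$.
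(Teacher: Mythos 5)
Your proof is correct and follows essentially the same route as the paper: both decompose the graph chain into $\binom{n}{2}$ independent two-state edge chains, use the per-slot contraction factor $|1-p-q|$, and combine the slots (paying a $\log\binom{n}{2}$ term) to get the stated bound. Your write-up is in fact slightly more explicit than the paper's, spelling out the recursion $\mu_{t+1}(1)-\tilde p=(1-p-q)(\mu_t(1)-\tilde p)$ and the TV-subadditivity for product measures that the paper leaves implicit.
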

	
	\begin{proof}
		Every edge slot can be represented by a two-state (close/open) Markov chain with transition matrix
		\[
		P = \begin{pmatrix}
		1-p & p   \\
		q   & 1-q 
		\end{pmatrix}
		\]
		and stationary distribution $\left( \frac{q}{p+q}, \frac{p}{p+q} \right)$. By using standard Markov chain arguments (see, e.g., \cite[Chapter 1]{levin2017markov}), the distance to the stationary distribution shrinks at each step by a factor of $1 - p - q$, i.e., 
		\[
		\norm{\mu_{t+1} - \pi}{TV} \le |1 - p - q|\norm{\mu_t - \pi}{TV}.
		\]
		Therefore, when $p + q \ne 1$, the mixing time $\tmix(\epsilon)$ of this two-state Markov chain is $O\left(\frac{\log (1/\epsilon) }{\log|1 - p - q|} \right)$ where $\epsilon < 1$. For all the $\binom{n}{2}$ edge slots, the time that all of them mix is $O\left(\frac{\log \binom{n}{2} + \log (1/\epsilon) }{\log|1 - p - q|} \right)$. When $p + q = 1$, instead, the graph mixes immediately, which confirms the fact that in this regime the graph model is equivalent to a sequence of independent graphs from $\cg(n, \tilde{p})$.
	\end{proof}
	
	\begin{remark}\label{rem:graphchain}
		\thmref{graphmixing} essentially tells us that, whenever at least one between $p$ and $q$ is large (e.g., $\Omega(1)$), the graph chain quickly converges to $\cg(n, \tilde{p})$ with $\tilde{p} = \frac{p}{p+q}$. This suggests that for a fast-changing edge-Markovian model $\cg(n,p,q)$ with $q=\Omega(1)$, we can consider w.l.o.g. the starting graph $G_0$ as sampled from $\cg(n, \tilde{p})$.
	\end{remark}

	\section{Missing proofs in \secref{slowdense}}

	\begin{lemma}\label{lem:conductanceconnect}
		For any graph $G$, the conductance of $G$ is equal to  \[\Phi_G = \min_{\substack{S \subset V, \vol(S)\le\vol(G)/2 \\ \text{$S$ is connected} }} \Phi_G(S). \]
	\end{lemma}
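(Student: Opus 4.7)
The plan is to prove that restricting the minimum in the definition of $\Phi_G$ to connected sets does not change its value. The direction $\min_{S \text{ connected}} \Phi_G(S) \ge \Phi_G$ is trivial (the connected sets form a subfamily of all admissible sets), so the substance lies in the reverse inequality: every candidate set in the full minimization can be replaced by a connected subset of no larger conductance.

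First I would take a minimizer $S^\star$ of $\Phi_G$ and decompose it into the vertex sets $C_1, \ldots, C_k$ of the connected components of the induced subgraph $G[S^\star]$. The key observation is that no edge of $G$ connects two different $C_i$'s; otherwise these would merge into a single component of $G[S^\star]$. Hence every $G$-edge leaving $C_i$ actually goes to $V \setminus S^\star$, and it follows immediately that
\[
|E(S^\star, V \setminus S^\star)| = \sum_{i=1}^{k} |E(C_i, V \setminus C_i)|, \qquad \vol(S^\star) = \sum_{i=1}^{k} \vol(C_i).
\]

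Applying the mediant inequality (a sum of numerators over a sum of positive denominators is at least the minimum of the individual ratios) gives $\Phi_G(S^\star) \ge \min_i \Phi_G(C_i)$, so there exists some $i^\star$ with $\Phi_G(C_{i^\star}) \le \Phi_G(S^\star) = \Phi_G$. Since $\vol(C_{i^\star}) \le \vol(S^\star) \le \vol(V)/2$, the set $C_{i^\star}$ is a legal and connected candidate, which yields $\min_{S\text{ connected}} \Phi_G(S) \le \Phi_G$ and closes the argument.

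I do not expect a serious obstacle here; the only minor care needed is to ignore degenerate components of volume zero (singleton vertices that are isolated in $G$), since they contribute nothing to either the numerator or denominator of $\Phi_G(S^\star)$ and can be discarded from $S^\star$ without changing its conductance. After this cleanup every remaining $C_i$ satisfies $\vol(C_i) \ge 1$, so the mediant-inequality step and the legality check for $C_{i^\star}$ both go through without issue.
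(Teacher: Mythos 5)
Your proof is correct and follows essentially the same route as the paper's: decompose the minimizer into the connected components of its induced subgraph, observe that boundary edges and volumes are additive across components, and apply the mediant inequality to extract a connected component of no larger conductance. Your version is slightly more careful than the paper's (you treat all $k$ components at once rather than two plus induction, and you explicitly discard isolated vertices of volume zero), but the underlying argument is identical.
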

	
	\begin{proof}
		By definition the conductance of a graph $\Phi_G$ is 
		\[
		\Phi_G = \min_{S \subseteq V, |S| \le n/2} \frac{|E(S, V \setminus S)|}{\vol(S)}
		\]
		
		Assuming the graph conductance is achieved by a disconnected set $D$. W.l.o.g. we assume it has two connected components $A$ and $B$. Then since there are no edges between $A$ and $B$, we have
		\[
		\Phi_G = \Phi_G(D) = \frac{|E(A, V \setminus A)| + |E(B, V \setminus B)|}{\vol(A) + \vol(B)}
		\]
		Then by a simple inequality,
		\[
		\Phi_G(D) \ge \min\left\{ \frac{|E(A, V \setminus A)|}{\vol(A)}, \frac{|E(B, V \setminus B)|}{\vol(B)} \right\}
		\]
		where $|A|, |B| \le |D| \le n/2$. By induction, this holds for all disconnected sets with more than two connected components. Hence if there exists a disconnected set $D$ which gives the minimum conductance over the entire graph then there must exist a connected set $S$ in the graph which also achieves this conductance. 
	\end{proof}
	
	\begin{lemma}\label{lem:connectedsets}
		Let $G$ be a graph with $n$ vertices and the maximum degree is $\Delta$. The number of all the connected sets with $k$ vertices is at most $n \cdot \Delta^{2k-2}$.
	\end{lemma}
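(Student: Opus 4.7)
The plan is to show that every connected vertex set $S$ of size $k$ is the image of at least one walk of length $2(k-1)$ in $G$ that starts from some vertex of $S$ and whose edges all lie in $G$; since the number of such walks is trivially at most $n \cdot \Delta^{2k-2}$, this will immediately give the bound.

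First, I would pick an arbitrary spanning tree $T$ of the subgraph induced by $S$ (this exists since $S$ is connected) and an arbitrary root vertex $r \in S$. Consider the depth-first-search traversal of $T$ starting at $r$: it walks along each edge of $T$ exactly twice (once when descending into a subtree, once when backtracking). Since $T$ has exactly $k-1$ edges, the DFS walk has length exactly $2(k-1)$, and the set of vertices visited is precisely $S$. Thus to each connected $k$-set $S$ we may associate at least one walk $W(S) = (r = v_0, v_1, \ldots, v_{2(k-1)})$ in $G$ of length $2(k-1)$, all of whose steps traverse edges of $G$, such that $S$ equals the set of vertices appearing in $W(S)$.

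Next I would count such walks. The starting vertex $v_0$ can be chosen in at most $n$ ways; for each subsequent step, the walk moves to a neighbor of the current vertex in $G$, so there are at most $\Delta$ choices. Therefore the total number of walks of length $2(k-1)$ in $G$ is bounded by $n \cdot \Delta^{2(k-1)} = n \cdot \Delta^{2k-2}$. Since the map $S \mapsto W(S)$ is injective (the walk determines $S$ as its vertex set, and two different sets yield different vertex sets and hence different walks), the number of connected $k$-vertex sets is at most $n \cdot \Delta^{2k-2}$, as required.

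There is no real obstacle here: the only points that require a moment of care are (i) the fact that a DFS traversal of a tree on $k$ vertices uses exactly $2(k-1)$ edge traversals, and (ii) that the same connected set may correspond to several walks, which is harmless for an upper bound. Everything else is routine counting.
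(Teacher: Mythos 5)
Your proposal is correct and follows essentially the same route as the paper: both associate to each connected $k$-set a closed DFS walk of length $2(k-1)$ over a spanning tree of the induced subgraph, bound the number of such walks by $n\cdot\Delta^{2k-2}$, and conclude via the injectivity of the map sending a set to (one of) its walks. The only cosmetic difference is that the paper phrases the walk count as an explicit encoding by neighbor ranks, whereas you count walks directly; the bound is identical.
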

	
	\begin{proof}
		For a fixed $k > 0$, we define an encoding which can represent a path of length $2k$ starting from a certain node. First we label $n$ vertices from $1$ to $n$. Then when enumerating all paths of length $2k$, we first output the label of the starting vertex, then for its neighbors we sort them based on their labels and then use the ranks in the following output.  
		
		The output should start with a label which represents the root node which is from 1 to $n$. Then the following numbers are all less than $\Delta$ because each of them only means the rank of a node among all the neighbors of its predecessors. Hence at most we have $n \cdot \Delta^{2k - 2}$ encodings. 
		
		There is an injective map between all the connected set of size $k$ with the encoding. For any such set, there is a spanning tree of size $k$. A DFS traversal of this tree would only use each edge of the spanning tree twice. Hence a not necessarily simple path of length $2(k-1)$ must exist. Our encoding essentially gives all possible paths of length $2(k - 1)$. So there must be such an injective map. The number of the strings in our output is $n \cdot \Delta^{2(k-1)}$. Hence the total number of connected sets is upper bounded by that as well. 
	\end{proof}

\end{document}